\newif\ifpictures
\numberwithin{equation}{section}
\newtheorem{thm}{Theorem}
\newtheorem{lemma}[thm]{Lemma}
\newtheorem{cor}[thm]{Corollary}
\newtheorem{conj}[thm]{Conjecture}
\numberwithin{thm}{section}
\newcounter{FNC}[page]
\def\newfootnote#1{{\addtocounter{FNC}{2}$^\fnsymbol{FNC}$%
     \let\thefootnote\relax\footnotetext{$^\fnsymbol{FNC}$#1}}}
\newcommand{\C}{\mathbb{C}}
\newcommand{\Q}{\mathbb{Q}}
\newcommand{\R}{\mathbb{R}}
\newcommand\cV{{\ensuremath{\mathcal{V}}}\xspace}
\newcommand{\lam}{\lambda}
\newcommand{\Sig}{\Sigma}
\newcommand{\lf}{\left}
\newcommand{\ri}{\right}
\newcommand{\ra}{\rightarrow}
\newcommand{\Lera}{\Leftrightarrow}
\DeclareMathOperator{\bs}{\backslash} % vol
\DeclareMathOperator{\rank}{rank}
\title[Separating inequalities for nonnegative polynomials that are not SOS]{Separating inequalities for nonnegative polynomials that are not sums of squares}
\author{Sadik Iliman} \author{Timo de Wolff}
\address{Goethe-Universit\"at, FB 12 -- Institut f\"ur Mathematik,
Postfach 11 19 32, D-60054 Frankfurt am Main, Germany}
\email{\{iliman,wolff\}@math.uni-frankfurt.de}
\thanks{Second author is supported by DFG grant TH 1333/2-1.}
\subjclass[2010]{11E20, 11E25, 14P99, 52A20}
\keywords{PSD, nonnegative polynomial, SOS, extreme ray, certificate, exact methods}
\begin{document}

\begin{abstract}
Ternary sextics and quaternary quartics are the smallest cases where there exist nonnegative polynomials that are not sums of squares (SOS). A complete classification of the difference between these cones was given by G. Blekherman via analyzing the extreme rays of the corresponding dual cones. However, an exact computational approach in order to build separating extreme rays for nonnegative polynomials that are not sums of squares is a widely open problem. We provide a method substantially simplifying this computation for certain classes of polynomials on the boundary of the PSD cones. In particular, our method yields separating extreme rays for every nonnegative ternary sextic with at least seven zeros. As an application to further instances, we compute a rational certificate proving that the Motzkin polynomial is not SOS. 
\end{abstract}

\maketitle

\section{Introduction}
\label{Sec:Introduction}

We consider real polynomials in the vector space of all homogeneous polynomials in $n$ variables of degree $d$, denoted by $H_{n,d}$. For every $p \in H_{n,d}$ we denote its real projective variety as $\cV(p)$. Let $P_{n,d} \subset H_{n,d}$ be the cone of all nonnegative polynomials in $n$ variables of degree $d$.

Inside $H_{n,2d}$, there are two full dimensional convex cones of special interest, the cone of nonnegative polynomials and the cone of sums of squares (for a general background about nonnegative polynomials and sums of squares see e.g. \cite{Delzell:Prestel:Survey,Lasserre:Moments,Marshall:Survey,Reznick:Survey}; for some metric and convexity properties of these cones see \cite{Blekherman:Convexity}).
\begin{eqnarray*}
	P_{n,2d} 	& := & \lf\{f\in H_{n,2d} \ : \  f(x)\geq 0, \text{ for all } x\in\mathbb R^n \ri\}\\
	\Sig_{n,2d} 	& := & \lf\{f\in P_{n,2d} \ : \  f=\sum\nolimits_{i}^{}f_i^2 \text{ for some } f_i\in H_{n,d}\ri\}.
 \end{eqnarray*}

The investigation of the relationship between the cone of nonnegative polynomials and the cone of sums of squares began in the seminal work of Hilbert when he showed that the cone of nonnegative polynomials coincides with the cone of sums of squares exactly in the cases of bivariate forms $(n=2)$, quadratic forms $(2d=2)$ and ternary quartics $(n=3, 2d=4)$ (\cite{Hilbert:Seminal}).

The Motzkin polynomial  $m(x,y,z) = x^4y^2+x^2y^4-3x^2y^2z^2+z^6$ was the first explicitly known example for a nonnegative polynomial which is not a sum of squares. Most proofs for this fact are based on term by term inspections (see e.g. \cite{Motzkin:Selected,Reznick:Survey}). In near past other proofs were found, e.g. using representation theory (see \cite{Bosse:Motzkin}). 

In \cite{Blekherman:Volume} Blekherman showed that for fixed dimension $2d \geq 4$ there are significantly more nonnegative polynomials than sums of squares as $n$ tends to infinity. However, the question of precisely when nonnegative polynomials begin to significantly overtake sums of squares is much less understood. In the smallest cases where there exist nonnegative polynomials which are not sums of squares $((n,2d)=(3,6), (4,4))$ the general conjecture is that these two cones do not differ very much. This conjecture is supported by the following two facts: 
Firstly, the maximal dimensional difference between exposed faces of the cone of nonnegative polynomials and sums of squares is one (see \cite{Blekherman:Dimension}). Secondly, all extreme rays of the dual sums of squares cone $\Sig_{3,6}^{*}$ have rank one or rank seven (see \cite{Blekherman:PSDandSOS}). 

Recently, in \cite{Blekherman:et:al} it is shown that except the discriminant there is a unique component of the algebraic boundary of $\Sig_{3,6}^{}$ with degree 83200 which indicates the complicated structure of the SOS cone. But still, the geometry and the relationship between these two cones in the smallest cases are less understood.

In the smallest cases $(n,2d)=(3,6)$ and $(n,2d)=(4,4)$ Blekherman showed that it is precisely the Cayley-Bacharach relation that prevents sums of squares from filling out the cone of nonnegative polynomials. More precisely, in \cite{Blekherman:PSDandSOS} it is shown that every separating extreme ray in the dual SOS cone for a given nonnegative polynomial that is not a sum of squares depends on a 9-point configuration for $(n,2d) = (3,6)$ resp. a 8-point configuration for $(n,2d) = (4,4)$ coming from the intersection of two qubic resp. three quadric polynomials. Furthermore, given an appropriate 9-point (resp. 8-point) configuration, one can write down an extreme ray of the dual SOS cone (see Theorems \ref{Thm:BlekhermanMain} and \ref{Thm:BlekhermanExtremeRay}) corresponding to faces of maximal dimension of the SOS cone.

A central problem in this area is how to obtain the separating inequalities efficiently. This can always be done in a numerical way (see Section \ref{Subsec:Blekhermanresults}), but is widely open for exact methods currently. Hence, finding constructive methods for computing these inequalities is one main research issue. Blekherman's result does not provide an efficient symbolic way to obtain a proper 9-point (resp. 8-point) configuration to solve this problem (see \ref{Subsec:Blekhermanresults} for further details). 
% % % Basically, for a given $p \in P_{3,6} \setminus \Sig_{3,6}$ (resp. $p \in P_{4,4} \setminus \Sig_{4,4}$) it only allows to choose two arbitrary coprime ternary cubics (resp. three quaternary quadrics) intersecting in 9 points (resp. 8 points) and to try out if one finds an appropriate separating extreme ray on this configuration (see \ref{Subsec:Blekhermanresults} for further details).

The key idea of this article is to construct a proper 9-point (resp. 8-point) configuration out of a given initial set of points. Specifically, we investigate nonnegative polynomials $p$ which lie on the boundary of the cones $P_{3,6}$ and $P_{4,4}$ (which cover most of the explicitly known nonnegative polynomials that are not SOS, see \cite{Blekherman:Volume}). Our main result, Theorem \ref{Thm:Main}, provides a sufficient condition for using $k$ zeros of $p$ as a subset of a 9-point (resp. 8-point) configuration. The idea is to fill up the set of $k$ zeros with $9-k$ (resp. $8-k$) points such that a genericity and a quadratic condition based on the Cayley-Bacharach relation holds (note that $k \leq 10$ for $p \in P_{3,6} \setminus \Sig_{3,6}$ and $p \in P_{4,4} \setminus \Sig_{4,4}$; see \cite{Blekherman:et:al,Choi:Lam:Reznick:RealZeros}). Given these conditions, which are computationally easily checkable (at least for $(n,2d) = (3,6)$), we can construct a separating extreme ray immediately. This method reduces the complexity of constructing separating extreme rays via symbolic computation significantly. Furthermore, it yields rational certificates for rational point configurations and even for rational varieties $\cV(p) \subset \Q^3$ in case of $(n,2d) = (3,6)$.

We show that for $p \in P_{3,6} \setminus \Sig_{3,6}$ and $k = 7$ (resp. $p \in P_{4,4} \setminus \Sig_{4,4}$ and $k = 6$) almost every 9-point (resp. 8-point) configuration containing the seven (resp. six) zeros leads to a certificate for a nonnegative polynomial $p$ to be not SOS.

In Section \ref{Sec:Prelim} we review some curve theoretical issues as, e.g., the Cayley-Bacharach relation and present Blekherman's results on the SOS cones $\Sig_{3,6}$ and $\Sig_{4,4}$. In Section \ref{Sec:Relaxation} we state and prove our main Theorems \ref{Thm:Main} and \ref{Thm:Main2} for ternary sextics and quaternary quartics and discuss exactness and rationality of our methods. Section \ref{Sec:7Points} deals with the special case of polynomials with exactly seven zeros. We show that in this case our method generically yields a separating extreme ray (Theorem \ref{Thm:7PointCase}). Finally we discuss the difficulties of dropping zeros in our method in Section \ref{Sec:Motzkin} via applying it to the Motzkin polynomial and constructing a rational extreme ray certificate for it. To the best of our knowledge, our certificate is the first separating rational extreme ray certificate for the Motzkin polynomial. Furthermore, based on the Motzkin polynomial we investigate some geometric aspects of the set of appropriate point configurations for our method (see Figure \ref{Fig:Motzkin}). In the appendix we discuss an example of the seven point case and an example of a point configuration for the Motzkin polynomial where our method does not yield a certificate.

\section{Preliminaries}
\label{Sec:Prelim}

\subsection{Curve theoretical background}
\label{Subsec:CurveBackground}
We recall some classical results from algebraic geometry. From now on we consider every investigated polynomial to be homogeneous. We start with the Cayley-Bacharach relation. It exists in various formulations (see \cite{Eisenbud:Green:Harris}); we use the ones given in \cite{Blekherman:PSDandSOS}.

\begin{lemma}
Let $p_1,p_2 \in \C[x_1,x_2,x_3]$ be two ternary cubics intersecting transversely in nine projective points $\gamma_1,\ldots,\gamma_9$. Let $v_1,\ldots,v_9$ be affine representatives of $\gamma_i$. Then there is a unique linear relation on the values of any ternary cubic on $v_i$
\begin{eqnarray}
	\sum_{j = 1}^9 u_j f(v_j) = 0  \text{ for all ternary cubics } f \label{Equ:CB}
\end{eqnarray}
with nonzero $u_j \in \C$. Furthermore, if \eqref{Equ:CB} is satisfied, then the following genericity condition holds
\begin{eqnarray}
	\text{no four of the } v_i \text{ lie on a line and no seven on a quadric.} \label{Equ:GenCond}
\end{eqnarray}
\label{Lem:CB}
\end{lemma}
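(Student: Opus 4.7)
The plan is to analyze the evaluation map $\mathrm{ev}\colon H_{3,3}\to\C^9$, $f\mapsto(f(v_1),\ldots,f(v_9))$. Since $X=V(p_1)\cap V(p_2)=\{v_1,\ldots,v_9\}$ is a transverse complete intersection, either Max Noether's AF+BG theorem or a direct computation of the Hilbert series $(1-t^3)^2/(1-t)^3$ of the coordinate ring of $X$ gives $\dim(I_X)_3=2$, so $\ker(\mathrm{ev})=\mathrm{span}(p_1,p_2)$. Hence the image of $\mathrm{ev}$ is an $8$-dimensional hyperplane in $\C^9$ whose defining equation $\sum u_j x_j=0$ is unique up to scalar, yielding the relation $\sum u_j f(v_j)=0$ for all ternary cubics $f$.

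For the nonvanishing $u_j\neq 0$ I invoke the classical Cayley-Bacharach statement: any cubic vanishing on eight of the nine points of $X$ also vanishes on the ninth. This is proved by restricting $p_2$ and the given cubic to the (generically smooth) elliptic curve $V(p_1)$ and observing that their restrictions, as sections of $\mathcal{O}(9)$, are determined up to linear equivalence by the sum of their zeros in the group law. Consequently the cubics through any eight of the $v_j$ form the same $2$-dimensional space $\mathrm{span}(p_1,p_2)$, so those eight points impose independent conditions on cubics. The eight evaluation functionals $\{E_j\}_{j\neq i}$ are then linearly independent in $H_{3,3}^{*}$, which forces $u_i\neq 0$ in the unique relation from the first step.

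For the genericity condition I argue by contradiction using B\'ezout. If four of the $v_i$ lie on a line $\ell$, then $|\ell\cap V(p_1)|\geq 4>3=\deg\ell\cdot\deg p_1$, forcing $\ell\mid p_1$, and likewise $\ell\mid p_2$; hence $\ell\subseteq V(p_1)\cap V(p_2)$, contradicting $0$-dimensionality of the intersection. Analogously, if seven $v_i$ lie on an irreducible quadric $Q$, then $|Q\cap V(p_1)|\geq 7>6$, forcing $Q\mid p_1$ and $Q\mid p_2$ and the same contradiction; if $Q=\ell_1\ell_2$ is reducible, pigeonhole places at least four of the seven points on one component, reducing to the line case.

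The main obstacle is the nonvanishing of the $u_j$: this step genuinely requires the classical Cayley-Bacharach theorem (equivalently, the Gorenstein property of the complete intersection $X$) rather than bare linear algebra. The other two ingredients are essentially routine consequences of Hilbert function / Max Noether and of B\'ezout.
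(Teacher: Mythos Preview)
The paper does not prove this lemma; it records it as a classical fact with references to Eisenbud--Green--Harris and to Blekherman. Your argument is correct and follows the standard route: the Hilbert-series (or Max Noether) computation gives $\dim(I_X)_3=2$ and hence the unique linear relation on $\C^9$; the classical Cayley--Bacharach theorem, for which you give the usual elliptic-curve sketch, forces each $u_j\neq 0$ via independence of the eight remaining evaluation functionals; and the B\'ezout contradiction handles the genericity condition cleanly, including the reducible-quadric case via pigeonhole. The only point worth flagging is that your Cayley--Bacharach sketch tacitly assumes $V(p_1)$ is smooth; as you acknowledge with ``(generically smooth)'', one should replace $p_1$ by a smooth member of the pencil $\langle p_1,p_2\rangle$, which exists because transversality at each $v_i$ makes a generic pencil member smooth there and Bertini handles smoothness away from the base locus.
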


\vspace{-0.5cm}

Analogously, we have for quaternary quadrics

\begin{lemma}
Let $p_1,p_2,p_3 \in \C[x_1,\ldots,x_4]$ be three quaternary quadrics intersecting transversely in eight projective points $\gamma_1,\ldots,\gamma_8$. Let $v_1,\ldots,v_8$ be affine representatives of $\gamma_i$. Then there is a unique linear relation on the values of any quaternary quadric  on $v_i$
\begin{eqnarray}
	\sum_{j = 1}^8 u_j f(v_j) = 0  \text{ for all quaternary quadrics } f \label{Equ:CB2}
\end{eqnarray}
with nonzero $u_j \in \C$. Furthermore, if \eqref{Equ:CB2} is satisfied, then the following genericity condition holds
\begin{eqnarray}
	\text{no five of the } v_i \text{ lie on a plane (i.e. projective linear 2-space).} \label{Equ:GenCond2}
\end{eqnarray}
\label{Lem:CB2}
\end{lemma}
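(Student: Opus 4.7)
The plan is to mirror the proof of Lemma~\ref{Lem:CB}, replacing the ambient space $H_{3,3}$ of ternary cubics (of dimension $10$) by the space $H_{4,2}$ of quaternary quadrics (also of dimension $\binom{5}{2}=10$), and replacing the Cayley--Bacharach property of the nine points cut out by two plane cubics by its analogue for the eight points cut out by three quadrics in $\P^3$. The evaluation map $\mathrm{ev}\colon H_{4,2}\to\C^8$, $f\mapsto (f(v_1),\dots,f(v_8))$, has kernel containing the three linearly independent forms $p_1,p_2,p_3$, so its image has codimension at least one in $\C^8$ and a nonzero relation of the form \eqref{Equ:CB2} exists.

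For uniqueness and nonvanishing of each $u_j$, I would use that a transverse $0$-dimensional complete intersection in $\P^3$ is arithmetically Cohen--Macaulay, so the homogeneous ideal of $\{v_1,\dots,v_8\}$ coincides with $(p_1,p_2,p_3)$ in every degree. In particular $\ker(\mathrm{ev})$ is exactly the $3$-dimensional span $\langle p_1,p_2,p_3\rangle$ in degree $2$, the image of $\mathrm{ev}$ has dimension $7$, and the relation is unique up to a scalar. I would then invoke the Cayley--Bacharach theorem for a complete intersection of type $(2,2,2)$ in $\P^3$, whose socle degree is $s=2+2+2-3-1=2$: any seven of the eight $v_i$ impose independent conditions on quadrics, equivalently every quadric vanishing on seven of the $v_i$ also vanishes on the eighth. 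This says that no standard basis vector $e_j\in\C^8$ lies in $\mathrm{image}(\mathrm{ev})$, forcing $u_j\neq 0$ for every $j$ in the unique relation.

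For the genericity condition I would argue by contradiction: suppose five of the $v_i$, say $v_1,\dots,v_5$, lie on a plane $\{L=0\}$. Substituting the reducible quadrics $L\cdot M$ into \eqref{Equ:CB2}, with $M$ ranging over linear forms, kills the five vanishing terms and leaves
\[
u_6\, L(v_6)M(v_6)+u_7\, L(v_7)M(v_7)+u_8\, L(v_8)M(v_8)\;=\;0\qquad\text{for every linear form }M,
\]
and in the generic subcase $L(v_j)\neq 0$ for $j=6,7,8$. Transversality of the complete intersection rules out three collinear $v_i$ in $\P^3$, since otherwise the spanning line would be forced into every $V(p_i)$ and hence into their common intersection, contradicting $0$-dimensionality. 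Therefore $v_6,v_7,v_8$ are linearly independent in $\C^4$, the map $M\mapsto(M(v_6),M(v_7),M(v_8))$ is surjective onto $\C^3$, and we conclude $u_6=u_7=u_8=0$, contradicting the previous paragraph; the degenerate subcases where further $v_j$ also satisfy $L(v_j)=0$ follow from the same argument with a smaller index set (and the case of all eight points on the plane is ruled out directly by transversality). The main obstacle I anticipate is supplying the Cayley--Bacharach input in the middle paragraph in a self-contained way; if no off-the-shelf reference applies directly, one reproves it via the Hilbert series $(1-t^2)^3/(1-t)^4=(1+t)^3/(1-t)$ of $\C[x_1,\dots,x_4]/(p_1,p_2,p_3)$ and an Artinian reduction by a general linear form, producing the Gorenstein Artinian algebra with Hilbert vector $(1,3,3,1)$ that underlies the duality.
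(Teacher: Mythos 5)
The paper gives no proof of Lemma~\ref{Lem:CB2}: both Cayley--Bacharach lemmas are quoted as classical facts, with the relation itself attributed to \cite{Eisenbud:Green:Harris} and \cite{Blekherman:PSDandSOS} and the genericity condition \eqref{Equ:GenCond2} to \cite{Hilbert:Seminal}, so there is no in-paper argument to compare against; your sketch supplies a correct, essentially self-contained one. The dimension count $\dim H_{4,2}=10$ together with $\langle p_1,p_2,p_3\rangle\subseteq\ker(\mathrm{ev})$ gives existence of the relation; saturatedness of a $0$-dimensional complete-intersection ideal gives $\ker(\mathrm{ev})=\langle p_1,p_2,p_3\rangle$ exactly, hence a $7$-dimensional image and uniqueness of the relation up to scalar; the classical Cayley--Bacharach theorem for a type $(2,2,2)$ complete intersection in $\P^3$ (degree bound $\sum d_i-n-1=2$) shows no standard basis vector lies in the image of $\mathrm{ev}$, forcing every $u_j\neq 0$. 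The reducible quadrics $L\cdot M$, combined with the observation that transversality forbids three collinear $v_i$ (a line through three of the points would lie in each quadric and hence in the $0$-dimensional intersection), give surjectivity of $M\mapsto(M(v_6),M(v_7),M(v_8))$ and hence the genericity condition; your handling of the degenerate subcases (shrinking the index set, and ruling out eight coplanar points via B\'ezout on the plane) is also correct. One terminological quibble: $\sum d_i-n-1=2$ is the Cayley--Bacharach degree bound, while the socle degree of the Artinian reduction you invoke at the end is $\sum d_i-n=3$, consistent with the Hilbert vector $(1,3,3,1)$; this has no bearing on the argument.
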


\vspace{-0.5cm}

For the genericity condition \eqref{Equ:GenCond2} see e.g. \cite{Hilbert:Seminal}. Note that, if all points $v_j$ are real, then all Cayley-Bacharach coefficients $u_j$ are real, too (see e.g. \cite[Lemma 4.1]{Blekherman:PSDandSOS}) and can be computed by solving a system of linear equations with the coefficients of forms in $H_{3,3}$ resp. $H_{4,2}$ as variables.

Each of the conditions \eqref{Equ:GenCond} and \eqref{Equ:GenCond2} can be checked easily by investigating the minors of the matrix given by the vectors $v_j$.\\

For the description of separating extreme rays of $\Sig_{3,6}^*$ yielded by Blekherman's Theorem \ref{Thm:BlekhermanMain} one needs to investigate 9-point configurations given by intersecting two ternary cubics. These ternary cubics need to be coprime. The following lemma shows that this generically is the case (see \cite{Reznick:PSDandSOS}).

\begin{lemma}
Suppose $A := \{v_1,\ldots,v_8\}$ is a set of eight distinct points in $\R^3$, no four on a line and no seven on a quadric and let $f_1,f_2$ be a basis for the vector space of all homogeneous cubics with projective variety affinely represented by $A$. Then $f_1$ and $f_2$ are relatively prime.
\label{Lem:ReznickBezout}
\end{lemma}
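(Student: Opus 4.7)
The plan is to argue by contradiction. Suppose $f_1$ and $f_2$ share a non-constant common factor $g$ and write $f_i = g h_i$; linear independence of $f_1,f_2$ forces that of $h_1,h_2$. I would then split into cases on $\deg g \in \{1,2,3\}$. The case $\deg g = 3$ is immediate: the $h_i$ become nonzero scalars, so $f_1 \propto f_2$, contradicting the basis assumption.

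For $\deg g = 2$, each $h_i$ is a linear form, and every point of $A$ lies on the conic $\{g=0\}$ or on the line $\{h_i=0\}$. The hypothesis "no four on a line" gives $|\{h_i=0\}\cap A|\le 3$, while "no seven on a quadric" gives $|\{g=0\}\cap A|\le 6$; together with $|A|=8$ this pins $|\{g=0\}\cap A|\in\{5,6\}$ and puts the remaining two or three points into $\{h_1=0\}\cap\{h_2=0\}$. Since two distinct points (resp.\ three collinear points) determine a unique line, this forces $h_1\propto h_2$, a contradiction. The degenerate possibility $g=\ell^2$ is ruled out on the spot, because then $\{\ell=0\}$ would have to carry at least five of the eight points, violating the "no four on a line" hypothesis.

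For $\deg g = 1$, the line $\{g=0\}$ carries at most three points of $A$, so each conic $\{h_i=0\}$ contains at least five of them. If $h_1$ and $h_2$ are coprime, B\'ezout gives $|\{h_1=0\}\cap\{h_2=0\}|\le 4$, contradicting the shared $\ge 5$ points. Otherwise $h_i = \ell m_i$ for some common linear factor $\ell$, so $A$ is covered by the three lines $\{g=0\},\{\ell=0\},\{m_i=0\}$. Since $\{g=0\}$ and $\{\ell=0\}$ together carry at most six points of $A$, at least two residual points lie on both $\{m_1=0\}$ and $\{m_2=0\}$, forcing $m_1\propto m_2$ and hence $h_1\propto h_2$, again contradicting linear independence.

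The step I expect to be the main obstacle is the combinatorial bookkeeping in the $\deg g = 1$ subcase with a shared linear factor: one has to exclude degenerate configurations (notably $\{g=0\}=\{\ell=0\}$, which would require more than three points on a single line, and residual counts below two) and then verify that the "no four on a line" bound really forces coincidence of the two candidate lines in every remaining subconfiguration. Once the case structure is fixed, all the individual verifications are short and rely only on B\'ezout and the two genericity hypotheses on $A$.
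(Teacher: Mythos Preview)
The paper does not supply its own proof of this lemma; it is quoted from Reznick's work \cite{Reznick:PSDandSOS} and stated without argument. The only closely related proof the paper gives is for the quaternary analogue (Lemma~\ref{Lem:ReznickAnalogon}), and there the method is exactly the one you use: assume a common factor $g$, note that degree constraints force $g$ and the cofactors to be linear, and then reach a contradiction by counting how many of the given points can lie on each linear locus.

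Your proposal follows the same template but has to handle more cases because cubics admit common factors of degree $1$, $2$, or $3$. Your case split is complete and each branch is sound: in the $\deg g=2$ branch the bounds $|\{g=0\}\cap A|\le 6$ and $|\{h_i=0\}\cap A|\le 3$ force at least two points of $A$ onto both lines $\{h_1=0\}$ and $\{h_2=0\}$, collapsing $h_1$ and $h_2$; in the $\deg g=1$ branch with $h_1,h_2$ coprime, B\'ezout's bound of four common zeros contradicts the five shared points, and the residual subcase with $h_i=\ell m_i$ is handled by the same two-points-on-two-lines argument. The degeneracies you flag ($g=\ell^2$; $\ell$ proportional to $g$) are genuine but, as you note, are killed immediately by the ``no four on a line'' hypothesis. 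So the proof is correct, and it is the natural elaboration of the paper's own argument for the quadric case.
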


This lemma yields that one can apply Bezout's theorem in order to compute a ninth intersection point $v_9$ of $f_1$ and $f_2$. However, $v_9$ might not be different from $v_1,\ldots,v_8$ (i.e. the intersection multiplicity might be greater than 1). But, again, generically this will not be the case as the following lemma shows (see \cite{Nie:Discriminants}).

\begin{lemma}
Let $f_1,f_2$ be two homogeneous polynomials in $n$ variables (with $n \geq 2$) of degree $d$ and generic coefficients. The discriminant $\Delta(f_1,f_2)$ vanishes if and only if $f_1(x) = f_2(x) = 0$ has a singular solution. The set of polynomials for which this is the case is a hypersurface.
\label{Lem:NieDiscriminant}
\end{lemma}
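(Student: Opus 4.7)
The plan is to construct the discriminant $\Delta$ explicitly from the incidence variety of singular common zeros via elimination theory, verify non-triviality on an explicit transverse example, and finish the hypersurface claim by a dimension count.

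First I would set $N=\binom{n+d-1}{d}$ and form the projective incidence variety
\[
  I \;=\; \bigl\{\bigl((f_1,f_2),x\bigr)\in \P(H_{n,d})^2\times \P^{n-1} \;:\; f_1(x)=f_2(x)=0,\; \rank(J(f_1,f_2)(x))<2\bigr\},
\]
cut out by the bihomogeneous conditions $f_i(x)=0$ together with the vanishing of all $2\times 2$ minors of the Jacobian $J(f_1,f_2)(x)$. Because $\P^{n-1}$ is complete, the image $W$ of $I$ under the projection to $\P(H_{n,d})^2$ is Zariski closed, and $(f_1,f_2)\in W$ precisely when the system has a singular common zero. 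Taking $\Delta$ to be a defining polynomial of $W$ then yields the equivalence in the statement.

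Next I would produce a single transverse pair to show $\Delta\not\equiv 0$. The choice $f_1=\sum_{i=1}^n x_i^d$, $f_2=\sum_{i=1}^n i\,x_i^d$ works: proportionality of the gradients $d(x_1^{d-1},\ldots,x_n^{d-1})$ and $d(1\,x_1^{d-1},\ldots,n\,x_n^{d-1})$ forces the distinct scalars $1,\ldots,n$ to coincide on all indices where $x_i\neq 0$, so exactly one coordinate of $x$ is nonzero, and then $f_1(x)=x_j^d\neq 0$ contradicts the common-zero assumption. Hence $V(\Delta)\subsetneq \P(H_{n,d})^2$.

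For the hypersurface claim I would carry out the dimension count on $I$. For fixed $x\in\P^{n-1}$, the Euler relation $x\cdot\nabla f_i(x)=d\,f_i(x)$ confines the gradient pair to $x^\perp\times x^\perp$, where rank deficiency of a $2\times(n-1)$ matrix has generic codimension $n-2$; together with the two linear constraints $f_i(x)=0$ this gives codimension $n$ in $\P(H_{n,d})^2$, so $\dim I=2(N-1)-1$. Generic finiteness of $I\to W$ (for generic $(f_1,f_2)\in W$ the non-transverse common zero is unique) then gives $\dim W=2(N-1)-1$, i.e.\ codimension one. The main obstacle is making the transversality of these conditions and the generic finiteness of the projection airtight in the presence of the Euler relation; this is precisely what Nie's sparse-resultant construction in \cite{Nie:Discriminants} accomplishes, exhibiting $\Delta$ as a single polynomial whose vanishing locus is a hypersurface.
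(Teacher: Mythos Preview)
The paper does not prove this lemma at all; it simply quotes the result from \cite{Nie:Discriminants}. So there is no ``paper's own proof'' to compare against, and your outline is already more than what the paper offers.

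Your incidence-variety construction and the properness argument for closedness of $W$ are standard and correct, and your explicit transverse pair $f_1=\sum x_i^d$, $f_2=\sum i\,x_i^d$ cleanly establishes $\Delta\not\equiv 0$. The fibre computation is also right: with $x=(1,0,\ldots,0)$ one sees that, on the hyperplane $\{f:f(x)=0\}$, the map $f\mapsto\nabla f(x)$ surjects onto $x^\perp$, so the rank-$\le 1$ locus pulls back with the expected codimension $n-2$, giving total codimension $n$ and $\dim I=2(N-1)-1$.

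The genuine gap is exactly the one you flag: generic finiteness of $I\to W$. You assert it and then hand the problem back to \cite{Nie:Discriminants}, which makes the argument circular as a self-contained proof. This step is not a formality --- for $d=1$ and $n\ge 3$ the map is \emph{not} generically finite (proportional linear forms have a whole $\P^{n-2}$ of common zeros, and $W$ is the diagonal of codimension $n-1>1$), so some hypothesis like $d\ge 2$ is genuinely needed and must be used. A clean way to close the gap, at least for the case $n=3$ the paper actually uses, is to exhibit a single pair $(f_1,f_2)\in W$ whose fibre in $I$ is finite: for instance two smooth plane curves of degree $d\ge 2$ with exactly one simple tangency and otherwise transverse intersection (e.g.\ a conic and a generic conic tangent to it at one point, then raised to degree $d$ by adding generic higher-degree perturbations supported away from the tangency). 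Combined with irreducibility of $I$ (which follows from the $GL_n$-homogeneity of the fibres over $\P^{n-1}$ and irreducibility of the determinantal locus), this yields $\dim W=\dim I$ and hence codimension one. Without such an example or an appeal to Nie's explicit resultant construction, the hypersurface claim remains unproved.
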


In Section \ref{Sec:7Points} we will investigate the special case of polynomials $p \in P_{3,6} \setminus \Sig_{3,6}$ with exactly seven zeros. 
In this context we use the following lemma (see \cite{Reznick:PSDandSOS}).

\begin{lemma}
Suppose $A$ is a set of seven distinct points in $\R^3$, no four on a line and no seven on a quadric with basis $f_1,f_2,f_3$ for the vector space of homogeneous cubics with projective variety affinely represented by $A$. Then $f_1,f_2,f_3$ have no common zeros outside of $A$.
\label{Lem:ReznickSeven}
\end{lemma}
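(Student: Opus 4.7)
Plan: I would argue by contradiction, supposing that some projective point $q$ not represented by $A$ is a common zero of $f_1, f_2, f_3$. The genericity of $A$ yields seven independent conditions on $H_{3,3}$, so $\dim\langle f_1,f_2,f_3\rangle = 3$, and the space of cubics vanishing on $A' := A \cup \{q\}$ coincides with this $3$-dimensional space. I would split on whether $A'$ satisfies the genericity conditions of Lemma \ref{Lem:ReznickBezout}: if it does, then by the dimensional content underlying that lemma, such $8$ points impose $8$ independent conditions on $H_{3,3}$, yielding a $2$-dimensional space of cubics through $A'$ and contradicting the $3$-dimensional count. Otherwise $A'$ violates exactly one of the two conditions, and since $A$ alone does not, the violating configuration must involve $q$.

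If four points of $A'$ are collinear, then three points of $A$, say $v_1, v_2, v_3$, together with $q$ lie on a common line $\ell$. The cubics through $A$ having $\ell$ as a factor are exactly $\ell \cdot Q$ for $Q$ a conic through $v_4, \ldots, v_7$; since four distinct points impose independent conditions on the $6$-dimensional space of conics, this subspace is only $2$-dimensional, strictly smaller than the full $3$-dimensional system. Hence there exists a cubic $f$ through $A$ with $\ell \nmid f$; by Bezout, $f \cap \ell$ has degree $3$ and must equal $\{v_1, v_2, v_3\}$, so $f(q) \neq 0$, contradicting $q$ being a base point.

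If seven points of $A'$ lie on a conic $C$, then $C$ contains six points of $A$, say $v_1, \ldots, v_6$, together with $q$. When $C$ is smooth, the subspace of cubics through $A$ containing $C$ equals $\{C \cdot \ell : \ell(v_7) = 0\}$, of dimension $2$, so some cubic $f$ through $A$ does not contain $C$; then $f|_C$ is a nonzero divisor of degree $6$ on $C \cong \P^1$ with zeros exactly $v_1, \ldots, v_6$, so $f(q) \neq 0$. When $C$ is a union of two distinct lines $\ell_1 \cup \ell_2$, the no-four-collinear hypothesis on $A$ forces the six points of $A$ to split $3+3$ between $\ell_1$ and $\ell_2$ with no point at the intersection (any such coincidence would limit the total to at most $5$ distinct points of $A$), and then $q \in \ell_1 \cup \ell_2$ produces four collinear points in $A'$, reducing to the previous case. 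A double-line $C$ would place six points of $A$ on a single line, directly violating the hypothesis.

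The main obstacle I anticipate is the degenerate sub-case analysis for the conic $C$---particularly the reducible case, where one must verify that the combinatorial constraints on $A$ always force a clean reduction to the four-collinear case; the smooth-conic and generic scenarios collapse to short dimension counts.
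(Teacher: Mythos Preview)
The paper does not supply its own proof of this lemma; it is quoted with a citation to Reznick's paper \cite{Reznick:PSDandSOS} and used as a black box. Consequently there is nothing in the present paper to compare your argument against.

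That said, your argument is correct and self-contained. The dimension count in the generic branch (eight points with no four collinear and no seven on a conic impose eight independent conditions on $H_{3,3}$) is the standard classical fact underlying the Cayley--Bacharach setup, and your case analysis when $A'$ fails genericity is complete: the four-collinear case is dispatched by a clean B\'ezout argument on the offending line, the smooth-conic case by restriction to $C \cong \P^1$, and the reducible-conic cases are correctly reduced either to the four-collinear case or to an immediate contradiction with the hypotheses on $A$. The one place worth an extra sentence of care is your assertion that ``four distinct points impose independent conditions on the $6$-dimensional space of conics'': this fails when all four are collinear, but in your setting $v_4,\ldots,v_7$ cannot all lie on a line (that would put four points of $A$ on a line), and with at most three collinear the dimension count does go through. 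Your anticipated obstacle---the reducible-conic sub-case---is indeed harmless: the $3{+}3$ split with no point of $A$ at the node is forced exactly as you say, and then $q$ on either component produces four collinear points of $A'$.
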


\subsection{Blekherman's results}
\label{Subsec:Blekhermanresults}

In \cite{Blekherman:PSDandSOS} Blekherman was able to fully characterize the extreme rays of the dual SOS cones $\Sig_{3,6}^*$ and $\Sig_{4,4}^*$, basically via using the Cayley-Bacharach relation. We recall his main result, concentrating on ternary sextics.

\begin{thm}
Suppose that a ternary sextic $p$ is nonnegative but not a sum of squares. Then there exist two real cubics $q_1, q_2\in H_{3,3}$ intersecting transversely in $9$ projective points $\gamma_1,\dots,\gamma_9$ which yield a certificate for $p \in P_{3,6} \setminus \Sig_{3,6}^{}$. More precisely, let $v_1,\dots,v_9$ be affine representatives of $\gamma_1,\dots,\gamma_9$. Then there exists a linear functional $l: H_{3,6} \ra \R$ given by 
$$l(f) \ = \ \sum_{i=1}^{9}a_i f(v_i)$$ 
for some $a_i\in\mathbb C$ such that $l(p)<0$ and $l(\Sig_{3,6}^{})\geq 0$. Furthermore, at most two of the points $\gamma_i$ are complex.
\label{Thm:BlekhermanMain}
\end{thm}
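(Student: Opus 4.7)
The plan is to produce $l$ by convex separation and then analyze its structure via an associated Hankel-type form on $H_{3,3}$. Since $\Sig_{3,6}^{}$ is a closed convex cone in $H_{3,6}^{}$ and $p\notin\Sig_{3,6}^{}$, Hahn--Banach yields a functional $l_0\in\Sig_{3,6}^{*}$ with $l_0(p)<0$. Writing $l_0$ as a nonnegative combination of extreme rays of the face $\{l\in\Sig_{3,6}^{*}:l(p)\le -1\}$, at least one summand $l$ must also satisfy $l(p)<0$, so we may assume from the start that $l$ lies on an extreme ray of $\Sig_{3,6}^{*}$.

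Next, to $l$ I associate the symmetric bilinear form $B_l$ on $H_{3,3}$ given by $B_l(f,g):=l(fg)$. The condition $l\in\Sig_{3,6}^{*}$ is equivalent to $B_l\succeq 0$, and the face of $\Sig_{3,6}^{*}$ containing $l$ is controlled by $K:=\ker B_l\subseteq H_{3,3}$ via apolarity. The rank-one rays, corresponding to $l(f)=f(v)$ for some $v\in\R^3$ and $\dim K=9$, cannot separate $p$ from $\Sig_{3,6}^{}$ since $l(p)=p(v)\ge 0$; for every other extreme ray I would argue from extremality and a dimension count on $H_{3,3}$ (of dimension $10$) that $\dim K\ge 2$, yielding two linearly independent cubics $q_1,q_2\in K$.

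By Lemma \ref{Lem:ReznickBezout} (in the generic case, and by a deformation argument otherwise), $q_1,q_2$ are coprime and intersect, by Bezout, in nine projective points $\gamma_1,\ldots,\gamma_9$ counted with multiplicity. A further extremality argument should preclude non-transverse intersections: a non-transverse intersection would force additional cubics into $K$ and shrink the face of $\Sig_{3,6}^{*}$ containing $l$. Applying Cayley--Bacharach (Lemma \ref{Lem:CB}) then produces unique coefficients $u_j$ with $\sum u_j f(v_j)=0$ for every cubic $f$. Using the relations $B_l(q_i,\cdot)=0$ together with the Gorenstein structure of the apolar quotient of $H_{3,3}$ modulo $K$, one concludes that $l$ is itself supported on $\{v_1,\ldots,v_9\}$, i.e.\ $l(f)=\sum_i a_i f(v_i)$ for all $f\in H_{3,6}$, with scalars $a_i$ proportional to the $u_j$.

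Because $l$ is real, complex intersection points occur in conjugate pairs, and the bound ``at most two complex $\gamma_i$'' should follow from a signature analysis of $B_l$ combined with the requirement $l(p)<0$ for a fixed real $p\in P_{3,6}$; this simultaneously pins the rank of $B_l$ to $7$. The main obstacle I anticipate is in the second step: proving that extremality forces $\dim K\ge 2$ (equivalently, that $\rank B_l\le 8$) is the technical heart of Blekherman's theorem and requires the Gorenstein/apolarity structure of $H_{3,3}$ together with PSD rank estimates. Bounding the number of complex intersection points demands a subsidiary but delicate signature-parity argument ruling out two or more conjugate pairs.
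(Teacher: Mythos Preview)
The paper does not prove this theorem. Theorem~\ref{Thm:BlekhermanMain} appears in Section~\ref{Subsec:Blekhermanresults} as a quoted result from \cite{Blekherman:PSDandSOS}; the paper only recalls it (together with Theorem~\ref{Thm:BlekhermanExtremeRay}) as background, and all of the paper's own work happens in Sections~\ref{Sec:Relaxation}--\ref{Sec:Motzkin}. So there is no ``paper's own proof'' to compare your proposal against.

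That said, your outline is broadly in the spirit of Blekherman's actual argument in \cite{Blekherman:PSDandSOS}: separate by an extreme ray of $\Sig_{3,6}^{*}$, pass to the associated PSD form $Q_l$ on $H_{3,3}$, rule out rank~$1$, and then show via Cayley--Bacharach that the remaining extreme rays are supported on a complete intersection of two cubics. You correctly flag the hard step (forcing $\dim\ker Q_l\ge 2$, i.e.\ $\rank Q_l\le 8$, and in fact $=7$). Two small inaccuracies: the coefficients $a_i$ are \emph{not} proportional to the Cayley--Bacharach coefficients $u_j$; rather, Theorem~\ref{Thm:BlekhermanExtremeRay} shows that eight of the $a_i$ are free positive parameters and the ninth is pinned by the relation~\eqref{Equ:a_9} involving the $u_j$. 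Also, your appeal to Lemma~\ref{Lem:ReznickBezout} for coprimeness of $q_1,q_2$ is circular at that stage, since that lemma already assumes eight points in general position, which is what you are trying to produce. In Blekherman's proof coprimeness and transversality come out of the kernel analysis, not from Lemma~\ref{Lem:ReznickBezout}.
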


This theorem works the same way for $p \in P_{4,4} \setminus \Sig_{4,4}$ (see \cite{Blekherman:PSDandSOS}).\\

Recall that for every $l \in \Sig_{3,6}^*$ there is a corresponding quadratic for $Q_l$ defined by $Q_l: H_{3,3} \ra \R, f \mapsto l(f^2)$ (see e.g. \cite{Blekherman:PSDandSOS, Laurent:Survey}). One defines the rank of a linear functional $l \in \Sig_{3,6}^*$ by $\rank(l) := \rank(Q_l)$. In \cite{Blekherman:PSDandSOS} it is shown that every extreme ray of $\Sig_{3,6}^{*}$, which does not correspond to point evaluation (i.e., a rank $1$ form), is given by a rank $7$ form which comes from a $9$-point evaluation. 

\begin{thm}
Suppose $l$ spans an extreme ray of $\Sig_{3,6}^{*}$ which does not correspond to a point evaluation. Let $W_l$ be the kernel of the
corresponding quadratic form $Q_l$ and suppose $q_1,q_2\in W_l$ intersect transversely in $9$ real projective points $\gamma_1,\dots,\gamma_9$ with affine representatives $v_1,\dots,v_9$ such that the unique Cayley-Bacharach relation is given by 
\begin{eqnarray*}
	  u_1f(v_1)+\dots +u_9f(v_9)=0 \quad for \, f\in H_{3,3}. 
\end{eqnarray*}
Then $Q_l$ can be uniquely written as 
$$Q_l(f) \ = \ a_1f(v_1)^2+\dots +a_9f(v_9)^2$$ 
with exactly one single negative coefficient $a_k$ and the rest of the $a_i$ strictly positive
and $a_k$ is given by 
\begin{eqnarray}
	  a_k \ = \ \frac{-u_k^2}{\frac{u_1^2}{a_1}+\dots+\frac{u_9^2}{a_9}-\frac{u_k^2}{a_k}}. \label{Equ:a_9}
\end{eqnarray}
Furthermore, any such form is extreme in $\Sig_{3,6}^{*}$.
\label{Thm:BlekhermanExtremeRay}
\end{thm}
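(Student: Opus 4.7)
My plan is to express $l$ as a weighted point-evaluation functional on $v_1,\dots,v_9$, derive \eqref{Equ:a_9} directly from Cayley--Bacharach applied to a third element of $W_l$, and then read off the sign pattern from the PSD/rank condition. First I would use that, since $l$ is not a point evaluation, the rank dichotomy for $\Sig_{3,6}^{*}$ cited in the introduction forces $\rank(l)=7$ and hence $\dim W_l=3$. The polarization $B_l(f,g):=l(fg)$ vanishes whenever $f\in W_l$; in particular $l$ annihilates the $19$-dimensional subspace $q_1H_{3,3}+q_2H_{3,3}$ of sextics in the ideal of $\{v_1,\dots,v_9\}$ (dimension $10+10-1=19$, since $q_1,q_2$ are coprime by Lemma \ref{Lem:ReznickBezout}, so $q_1H_{3,3}\cap q_2H_{3,3}=\R\cdot q_1q_2$). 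Because $\dim H_{3,6}=28$ and the evaluation map $H_{3,6}\to\R^9$ has exactly this $19$-dimensional kernel, it is surjective, and $l$ descends to a unique linear functional on $\R^9$, yielding $Q_l(f)=\sum_{i=1}^{9}a_i f(v_i)^2$ with uniquely determined $a_i$.

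Next I would exploit the third generator of $W_l$. Pick $w\in W_l\setminus\mathrm{span}(q_1,q_2)$, chosen generically so that $w(v_i)\neq 0$ for every $i$. Then $B_l(w,g)=0$ for all $g\in H_{3,3}$ unwinds to $\sum_i a_iw(v_i)g(v_i)=0$; since the image of cubic evaluation is the hyperplane $\sum u_iy_i=0$ (Lemma \ref{Lem:CB}), its orthogonal complement forces $a_iw(v_i)=\lambda u_i$ for some $\lambda\in\R\setminus\{0\}$, so in particular every $a_i\neq 0$. Applying Cayley--Bacharach now to $w$ itself gives $\sum_i u_iw(v_i)=0$; substituting $w(v_i)=\lambda u_i/a_i$ yields $\sum_i u_i^2/a_i=0$, which rearranges to \eqref{Equ:a_9}.

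For the sign pattern, I would view $Q_l$ as $y\mapsto\sum a_iy_i^2$ restricted to the $8$-dimensional hyperplane $\{\sum u_iy_i=0\}$ and eliminate $y_k$ via $y_k=-u_k^{-1}\sum_{i\neq k}u_iy_i$ to obtain the $8\times 8$ matrix
\[
M_k \;=\; \mathrm{diag}(a_i)_{i\neq k} \;+\; \tfrac{a_k}{u_k^{2}}\,u^{(k)}(u^{(k)})^{T}, \qquad u^{(k)}:=(u_i)_{i\neq k}.
\]
The condition $Q_l\succeq 0$ of rank exactly $7$ amounts to $M_k\succeq 0$ with $\rank M_k=7$: any negative $a_j$ ($j\neq k$) would give $M_k$ a negative eigenvalue along $e_j$, so all $a_i$, $i\neq k$, must be strictly positive and the rank-one perturbation $\tfrac{a_k}{u_k^{2}}u^{(k)}(u^{(k)})^{T}$ negative semidefinite (i.e., $a_k<0$) and Sherman--Morrison-tuned to annihilate exactly one eigenvalue---which is precisely the relation derived above. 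Since $\sum u_i^2/a_i=0$ with all $u_i\neq 0$ precludes uniform sign, at least one $a_i$ must be negative, and by the argument above no more than one can be, so exactly one $a_k$ is negative.

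For the converse, running the same computation backwards shows that any $Q_l$ assembled from data $(v_i),(a_i)$ as in the statement is PSD of rank $7$ with $\mathrm{span}(q_1,q_2)\subset\ker Q_l$; extremality then follows from a face-dimension count: any $l'\in\Sig_{3,6}^{*}$ with $W_l\subset\ker Q_{l'}$ vanishes on $W_l\cdot H_{3,3}\subset H_{3,6}$, which has dimension $27$ (for a generic third generator of $W_l$ coprime to $q_1,q_2$), so $l'$ lies in the $1$-dimensional space $\R\cdot l$. The main obstacle I anticipate is the rank/sign analysis---verifying rank is \emph{exactly} $7$ and that only one $a_k$ can be negative---which hinges crucially on every Cayley--Bacharach coefficient $u_i$ being nonzero (Lemma \ref{Lem:CB}).
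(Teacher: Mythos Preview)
The paper does not provide a proof of this theorem: it is stated in Section~\ref{Subsec:Blekhermanresults} as a result quoted from \cite{Blekherman:PSDandSOS}, so there is no in-paper argument to compare against. Your outline follows the natural route (and essentially Blekherman's): factor $l$ through the nine-point evaluation, use a third kernel element together with Cayley--Bacharach to force $\sum_i u_i^2/a_i=0$, and then read off the sign pattern. The broad strategy is correct, but two steps need repair.

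\medskip
\textbf{The ``generic $w$'' step hides the real work.} You write that $w\in W_l\setminus\mathrm{span}(q_1,q_2)$ can be ``chosen generically so that $w(v_i)\neq 0$ for every $i$'', and then deduce $a_i\neq 0$ for all $i$. But since $q_1(v_i)=q_2(v_i)=0$, the value $w(v_i)$ depends only on the coset of $w$ modulo $\mathrm{span}(q_1,q_2)$; if a fixed third generator $w_0$ satisfies $w_0(v_j)=0$, then \emph{every} element of $W_l$ vanishes at $v_j$, and no generic choice helps. In fact this situation is not excluded by rank and positive semidefiniteness alone: if exactly two of the $a_i$ vanish (say $a_1=a_2=0$) and the remaining seven are strictly positive, then $D=\mathrm{diag}(a_i)$ restricted to the hyperplane $H=\{\sum u_iy_i=0\}$ is PSD of rank~$7$ with kernel $\mathrm{span}(u_2e_1-u_1e_2)$, and the corresponding $w_0$ vanishes at $v_3,\dots,v_9$. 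What rules this out is \emph{extremality}: such an $l=\sum_{i\ge 3}a_i\delta_{v_i}$ is a proper conic combination of point evaluations. You must invoke extremality here, not only at the end.

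\medskip
\textbf{The sign argument via $M_k$ is imprecise.} The claim ``any negative $a_j$ ($j\neq k$) would give $M_k$ a negative eigenvalue along $e_j$'' is not correct as stated, since $e_j^{T}M_ke_j=a_j+(a_k/u_k^{2})u_j^{2}$ need not be negative. The clean argument is: if $a_j<0$ and $a_k<0$ for distinct $j,k$, then $y:=u_ke_j-u_je_k\in H$ gives $\sum a_iy_i^2=a_ju_k^2+a_ku_j^2<0$, contradicting $Q_l\succeq 0$. Together with $\sum u_i^2/a_i=0$ (which forbids all $a_i$ of one sign), this yields exactly one negative $a_k$.

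\medskip
For the converse, your dimension count $\dim(W_l\cdot H_{3,3})=27$ needs the third generator $w_0$ to be coprime to each of $q_1,q_2$. Once the first gap is closed you know $w_0(v_i)\neq 0$ for all $i$; then any common factor $g$ of $w_0$ and $q_1$ would satisfy $g(v_i)\neq 0$, forcing $q_1/g$ to vanish at all nine $v_i$, which contradicts \eqref{Equ:GenCond}. With this in hand, inclusion--exclusion gives $10+10+10-1-1-1+0=27$ and your face argument goes through.
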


Again, an analogue version for $p \in P_{4,4} \setminus \Sig_{4,4}$ holds (see \cite{Blekherman:PSDandSOS}).\\

Suppose $p \in P_{3,6} \setminus \Sig_{3,6}$ and we want to construct a separating extreme ray $l$ for $p$ using the upper theorem. Therefore, we need to find two coprime ternary cubics $q_1,q_2$ intersecting in 9 points. But, $q_1,q_2$ need to be contained in the kernel $W_l$ of the quadratic form $Q_l$ corresponding to $l$. Hence, one already needs to know $l$ to determine $q_1,q_2$.

This problem can be avoided by choosing a 9-point configuration $A = \{v_1,\ldots,v_9\}$ coming from an intersection of some real ternary cubics $q_1,q_2$. So, a separating extreme ray $l$ is obtained by finding an appropriate $a = (a_1,\ldots,a_9)$ satisfying \eqref{Equ:a_9} with respect to $A$ such that $l_a(p) < 0$. The question whether such an $a$ exists is unclear a priori though it can be answered by quantifier elimination methods (see e.g. \cite{Basu:Pollack:Roy:Algo,Bochnak:Coste:Roy:RealAlgebraic}). But, to the best of our knowledge, no methods are known to compute an appropriate $a$ in a symbolic, exact way efficiently.

However, one can solve this problem numerically. Let $p\in P_{3,6} \setminus \Sig_{3,6}^{}$ be a ternary sextic and $r\in \text{int}\left(\Sig_{3,6}^{}\right)$ (e.g. $r=x^6+y^6+z^6$ or $r=(x^2+y^2+z^2)^3$). Consider the following semidefinite optimization problem.
\begin{eqnarray*}
	  & & \min_{\lambda\in\mathbb R} \lambda \qquad	  \text{such that} \quad p+\lambda r\in \Sig_{3,6}^{}
\end{eqnarray*}

For $\lambda$ minimal the polynomial  $p+\lambda r$ is strictly positive and lies on the boundary of $\Sig_{3,6}$. Hence, $p+\lambda r$ is a sum of exactly three squares $s_1^2+s_2^2+s_3^2$ (see \cite{Blekherman:PSDandSOS}).

The polynomials $s_1,s_2,s_3$ have no common zeros and an appropriate linear combination of two of these polynomials can be used as $q_1$ and $q_2$ in Blekherman's theorem. Of course, the computation of the corresponding nine intersection points will be difficult and not exact, too. Furthermore, getting ``nice'' values (e.g. a rational minimal $\lambda$) depends also highly on the choice of the polynomial $r\in \text{int}\left(\Sig_{3,6}^{}\right)$. It is not clear how to choose $r$ in dependence of $p$.

In the case $p\in P_{4,4} \setminus \Sig_{4,4}$ this approach works the same way. For $\lambda$ minimal the polynomial $p+\lambda r$ is a sum of exactly four squares $p+\lambda r = s_1^2+s_2^2+s_3^2 + s_4^2$. Three of these four $s_j$ have a common zero (see \cite{Blekherman:PSDandSOS}).

\section{A Certificate for boundary polynomials not to be SOS}
\label{Sec:Relaxation}

Our approach to construct a separating extreme ray for a given boundary polynomial $p \in \partial P_{3,6} \setminus \Sig_{3,6}$ is to investigate certain point sets $A := \{v_1,\ldots,v_9\}$ containing the variety $\cV(p)$, satisfying the genericity condition \eqref{Equ:GenCond} and for which we can certify that there are coprime polynomials $q_1,q_2 \in H_{3,3}$ with $\cV(q_1) \cap \cV(q_2) = A$.

Note that, if we talk about zeros of homogeneous polynomials in this and the following sections, then we always consider their affine representatives with slight abuse of notation.

The easiest case is when $p$ has at least eight zeros $v_1,\ldots,v_8$ (satisfying \eqref{Equ:GenCond}). Lemma \ref{Lem:ReznickBezout} provides the existence of coprime $q_1,q_2$ vanishing on $v_1,\ldots,v_8$ and thus a ninth point $v_9$ is given by Bezout's Theorem. For a generic set of zeros $v_1,\ldots,v_8$ the corresponding coprime polynomials $q_1,q_2$ have generic coefficients and hence, due to Lemma \ref{Lem:NieDiscriminant}, we have $v_9 \notin \{v_1,\ldots,v_8\}$ generically. Thus, $A := \{v_1,\ldots,v_9\}$ satisfies \eqref{Equ:GenCond} generically. This yields a certificate $l$ immediately since for any choice of $a_1,\ldots,a_8$ we obtain an $a_9 < 0$ by \eqref{Equ:a_9} and hence
\begin{eqnarray*}
	l(p) \ = \ \sum_{j = 1}^9 a_j p(v_j) \ = \ a_9 p(v_9) \ < \ 0.
\end{eqnarray*}

In the following we generalize this idea to any number of zeros between one and eight. We choose the zeros $v_1,\ldots,v_k$ of a polynomial $p \in \partial P_{3,6} \bs \Sig_{3,6}$ as a subset of the nine intersection points $A := \{v_1,\ldots,v_9\}$ of two coprime ternary cubics. We provide a symbolic method based on genericity conditions which yields a separating extreme ray if one finds a $(9-k)$-point configuration satisfying some quadratic relation. Specifically, the following theorem holds.

\begin{thm}
Let $p \in \partial P_{3,6}$ such that $p$ is not $SOS$. Let $A := \{v_1,\ldots,v_9\} \subset \R^3$ such that the genericity condition \eqref{Equ:GenCond} holds and $\cV(p) = \{v_1,\ldots,v_k\}$ with $1 \leq k \leq 8$  (i.e.: the projectivization of $A$ is the intersection of two coprime polynomials $q_1,q_2 \in H_{3,3}$). Then there exists a certificate $l_a: H_{3,6} \ra \R, f \mapsto \sum_{j = 1}^9 a_j f(v_j)$, $a := (a_1,\ldots,a_9) \in \R^9$ with respect to $A$ for $p$ to be not $SOS$, if the following inequality holds
\begin{eqnarray}
	(u_{k+1}^2 + \cdots + u_8^2) (p(v_{k+1}) + \cdots + p(v_8))	& <	& u_9^2 p(v_9). \label{Equ:OurRelaxation}
\end{eqnarray}
Here the $u_j$ are given by the unique Cayley-Bacharach relation on $A$ and $l_a$ is an extreme ray of $\Sig_{3,6}^*$.
\label{Thm:Main}
\end{thm}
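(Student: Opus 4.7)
The plan is to exhibit an explicit one-parameter family of functionals $l_{a(M)}(f) = \sum_{j=1}^9 a_j(M) f(v_j)$, each of which Theorem \ref{Thm:BlekhermanExtremeRay} guarantees to be an extreme ray of $\Sig_{3,6}^*$, and then to drive the parameter so that the separating inequality $l_{a(M)}(p) < 0$ becomes the hypothesis \eqref{Equ:OurRelaxation} in the limit.

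First I would invoke Lemma \ref{Lem:CB} on $A$: the genericity condition \eqref{Equ:GenCond} together with the existence of coprime cubics $q_1, q_2$ with $\cV(q_1) \cap \cV(q_2) = A$ yields a unique Cayley-Bacharach relation $\sum_{j=1}^9 u_j f(v_j) = 0$ with all $u_j \neq 0$. For a parameter $M > 0$, I set
\begin{eqnarray*}
a_1(M) = \cdots = a_k(M) = M, \qquad a_{k+1}(M) = \cdots = a_8(M) = 1,
\end{eqnarray*}
and let $a_9(M)$ be defined by formula \eqref{Equ:a_9}. Since $a_1(M), \ldots, a_8(M) > 0$ and each $u_j^2 > 0$, the formula returns a strictly negative $a_9(M)$, and Theorem \ref{Thm:BlekhermanExtremeRay} applies verbatim: $l_{a(M)}$ spans an extreme ray of $\Sig_{3,6}^*$, so in particular $l_{a(M)} \in \Sig_{3,6}^*$.

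It remains only to ensure that $l_{a(M)}(p) < 0$ for some $M$. Using $p(v_j) = 0$ for $j \leq k$ together with \eqref{Equ:a_9}, this inequality is equivalent to
\begin{eqnarray*}
\lf(\sum_{j=k+1}^8 p(v_j)\ri) \lf(\sum_{j=1}^k \frac{u_j^2}{M} + \sum_{j=k+1}^8 u_j^2\ri) < u_9^2\, p(v_9).
\end{eqnarray*}
As $M \to \infty$ the first inner sum vanishes, and the left-hand side converges (monotonically) to $\lf(\sum_{j=k+1}^8 p(v_j)\ri)\lf(\sum_{j=k+1}^8 u_j^2\ri)$, which is strictly less than $u_9^2 p(v_9)$ by hypothesis \eqref{Equ:OurRelaxation}; hence for all $M$ large enough the inequality holds, producing the required separating extreme ray.

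There is no deep obstacle; the conceptual point one must locate is that pushing the weights on the zeros of $p$ to infinity makes their contribution to the denominator of \eqref{Equ:a_9} negligible, so that in the limit only the weights at $v_{k+1}, \ldots, v_8, v_9$ survive and produce exactly \eqref{Equ:OurRelaxation}. The degenerate case $k = 8$ is automatic: \eqref{Equ:OurRelaxation} reduces to $0 < u_9^2 p(v_9)$, which holds since $u_9 \neq 0$ and $v_9 \notin \cV(p)$ forces $p(v_9) > 0$; this recovers the ``eight zeros'' case informally discussed before the theorem statement.
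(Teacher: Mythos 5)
Your proposal is correct and takes essentially the same route as the paper: set $a_1=\cdots=a_k$ to a large parameter $M$, $a_{k+1}=\cdots=a_8=1$, define $a_9$ via \eqref{Equ:a_9}, invoke Theorem \ref{Thm:BlekhermanExtremeRay} for extremality, and observe that as $M\to\infty$ the condition $l_a(p)<0$ converges to \eqref{Equ:OurRelaxation}. The paper phrases the intermediate step with general $a_1,\ldots,a_k$ and a quantity $\lambda_{a_1,\ldots,a_k}$ before specializing, but the content is identical; your explicit note on the degenerate case $k=8$ is a welcome clarification that the paper leaves implicit.
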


Note that the Cayley-Bacharach coefficients $u_j$ can be computed by solving a system of linear equations (see exemplarily in Section \ref{Sec:Motzkin}). Additionally, all $u_j$ are rational, if every point in $A$ is rational. Note furthermore that, for an arbitrary $p$, it is not clear whether an $A$ with $\cV(p) \subset A$ satisfying \eqref{Equ:OurRelaxation} does always exist. We discuss certain special cases in the two following sections.

\begin{proof}
Let $p \in \partial P_{3,6}$ with $\cV(p) = \{v_1,\ldots,v_k\}$, $1 \leq k \leq 8$ such that the genericity condition \eqref{Equ:GenCond} holds for $\cV(p)$. We choose points $v_{k+1},\ldots,v_8$ such that \eqref{Equ:GenCond} is still satisfied. We obtain $v_9$ as the intersection of two relatively prime, cubic polynomials spanning up the vector space of all ternary cubics vanishing on $v_1,\ldots,v_8$ (see Lemma \ref{Lem:ReznickBezout}). Notice that we obtain $v_9 \notin \{v_1,\ldots,v_8\}$ generically due to Lemma \ref{Lem:NieDiscriminant} and $v_9$ has to be real since $v_1,\ldots,v_9$ is the intersection of two real polynomials (see e.g. \cite{Reznick:PSDandSOS}).
Let $u_1,\ldots,u_9$ be the unique Cayley-Bacharach coefficients for $v_1,\ldots,v_9$ in the sense of \eqref{Equ:CB}. Since $v_1,\ldots,v_9 \in \R^3$ we have $u_1,\ldots,u_9 \in \R$ (see \cite[Lemma 4.1]{Blekherman:PSDandSOS}).

By Theorem \ref{Thm:BlekhermanExtremeRay} every vector $a := (a_1,\ldots,a_9) \in \R^9$ satisfying \eqref{Equ:a_9} with $a_1,\ldots,a_8 > 0$, $a_9 < 0$ yields an extreme ray $l_a : H_{3,6} \ra \R, f \mapsto \sum_{j = 1}^9 a_j f(v_j)$ of the dual SOS cone $\Sig_{3,6}^*$. $l_a$ is the dual of a separating hyperplane for $p$ if $l_a(p) < 0$, i.e., since $\cV(p) = \{v_1,\ldots,v_k\}$, if $a_{k+1} p(v_{k+1}) + \cdots + a_9 p(v_9) < 0$. By \eqref{Equ:a_9} this is equivalent to
\begin{eqnarray*}
	a_{k+1} p(v_{k+1}) + \cdots - \frac{u_9^2}{\frac{u_1^2}{a_1}+\dots+\frac{u_8^2}{a_8}} p(v_9)	& <	& 0 \\
	\Lera (a_{k+1} p(v_{k+1}) + \cdots + a_8 p(v_8)) \cdot \lf(\frac{u_1^2}{a_1}+\dots+\frac{u_8^2}{a_8}\ri)	& <	&  u_9^2 p(v_9).	
\end{eqnarray*}
Let $\lam_{a_1,\ldots,a_k} := \sum_{j = 1}^k \frac{u_j^2}{a_j} \cdot (a_{k+1} p(v_{k+1}) + \cdots + a_8 p(v_8)) > 0$. Thus, $l_a(p) < 0$, if 
\begin{eqnarray*}
	\lam_{a_1,\ldots,a_k} + \sum_{j = k+1}^8 p(v_j) \lf(u_j^2 + \sum_{i \in \{k+1,\ldots,8\} \bs \{j\}} \frac{a_j u_i^2}{a_i}\ri)	& <	& u_9^2 p(v_9).
\end{eqnarray*}
We choose $a_{k+1} := 1,\ldots,a_8 := 1$ and obtain
\begin{eqnarray*}
	\lam_{a_1,\ldots,a_k} + (u_{k+1}^2 + \cdots + u_8^2) (p(v_{k+1}) + \cdots + p(v_8))	& <	& u_9^2 p(v_9).
\end{eqnarray*}
Since $\lim_{a_1,\ldots,a_k \ra \infty} \lam_{a_1,\ldots,a_k} \searrow 0$ the relaxation \eqref{Equ:OurRelaxation} yields an extreme ray $l_a$ on $A$ separating $p$ from the SOS cone $\Sig_{3,6}$.

If on the other hand any SOS polynomial $g \in \Sig_{3,6}$ with $\cV(g) = \cV(p)$ would satisfy \eqref{Equ:OurRelaxation}, then it follows from the upper construction that $l_a(g) < 0$ for $a_1,\ldots,a_k$ sufficiently large, $a_{k+1},\ldots,a_8 = 1$ and $a_9$ given by \eqref{Equ:a_9}. This is a contradiction with Blekherman's Theorems \ref{Thm:BlekhermanMain} and \ref{Thm:BlekhermanExtremeRay}. Thus, \eqref{Equ:OurRelaxation} indeed is a certificate for $p$ to be not SOS.
\end{proof}

Note that it is also easy to show that condition \eqref{Equ:OurRelaxation} is never satisfied for SOS polynomials $g \in \Sig_{3,6}$ with $\cV(g) = \cV(p)$ in order to prove extremality of $l_a$ by direct calculation and without usage of Blekherman's Theorems. This follows already from $g = \sum_{j = 1}^r h_i^2$ and the Cayley-Bacharach relations.\\
% % % 
% % % To show that \eqref{Equ:OurRelaxation} indeed is a certificate for $p$ to be not SOS, it remains to prove that no SOS polynomial $g \in \Sig_{3,6}$ with $\cV(g) = \cV(p)$ satisfies \eqref{Equ:OurRelaxation}. Since $g \in \Sig_{3,6}$, there are polynomials $h_1,\ldots,h_r \in H_{3,3}$ such that $g = \sum_{i = 1}^r h_i^2$. Recall that the Cayley-Bacharach relation is satisfied for $A$, i.e. $\sum_{j = 1}^9 u_j h_i(v_j) = 0$. Therefore, and because $\cV(g) \subseteq \cV(h_i)$ for every $i \in \{1,\ldots,r\}$, we have $(\sum_{j = k+1}^8 u_j h_i(v_j))^2 = u_9^2 \sum_{i = 1}^r h_i^2(v_9)$ for every $h_i$. Thus,
% % % \begin{eqnarray*}
% % % 	&	& (u_{k+1}^2 + \cdots + u_8^2) \cdot  (g(v_{k+1}) + \cdots + g(v_8)) - u_9^2 g(v_9) \\
% % % 	& =	& (u_{k+1}^2 + \cdots + u_8^2) \cdot  \Big(\sum_{i = 1}^r h_i^2(v_{k+1}) + \cdots + \sum_{i = 1}^r h_i^2(v_8)\Big) - u_9^2 \sum_{i = 1}^r h_i^2(v_9) \\
% % % 	& =	& \sum_{i = 1}^r (u_{k+1}^2 + \cdots + u_8^2) \cdot  (h_i^2(v_{k+1}) + \cdots + h_i^2(v_8)) - u_9^2 h_i^2(v_9) \\
% % % 	& \geq	& \sum_{i = 1}^r (u_{k+1} h_i(v_{1}) + \cdots + u_8 h_i(v_8))^2 - u_9^2 h_i^2(v_9) \ = \ 0,
% % % \end{eqnarray*}
% % % where the last inequality holds due to $u_s^2h_i(v_t)^2 + u_t^2h_i(v_s)^2 \geq 2 u_s u_t h_i(v_s)h_i(v_t)$ for every $i \in \{1,\ldots,r\}$ and $s,t \in \{k+1,\ldots,8\}$ with $s \neq t$.

In order to prove an analogon of the upper theorem for $P_{4,4} \setminus \Sig_{4,4}$, we need to show that Lemma \ref{Lem:ReznickBezout} also holds for a seven point set $A := \{v_1,\ldots,v_7\} \subset \R^4$. Generically, the vector space of all quadrics vanishing on $A$ has dimension three (see \cite{Eisenbud:Syzygie}).

\begin{lemma}
Suppose $A := \{v_1,\ldots,v_7\}$ is a set of seven distinct points in $\R^4$, no four on a plane such that $q_1,q_2,q_3$ is a basis for the vector space of all homogeneous quadrics with projective variety affinely represented by $A$. Then $q_1,q_2,q_3$ are relatively prime.
\label{Lem:ReznickAnalogon}
\end{lemma}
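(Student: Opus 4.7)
My plan is to argue by contradiction on the degree of a hypothetical non-constant common factor $h$ of $q_1, q_2, q_3$. Since $\deg q_i = 2$, such an $h$ must have degree $1$ or $2$, and I will handle these two cases separately.

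The degree $2$ case is immediate: if $\deg h = 2$, then each $q_i$ is a scalar multiple of $h$, so $q_1, q_2, q_3$ are linearly dependent, contradicting the hypothesis that they form a basis of the $3$-dimensional space of quadrics vanishing on $A$.

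The substantive case is $\deg h = 1$. Here I would write $q_i = h \cdot \ell_i$ for linear forms $\ell_i$. Linear independence of the $q_i$ forces $\ell_1, \ell_2, \ell_3$ to be linearly independent as linear forms on $\R^4$, so their common projective zero set $\cV(\ell_1) \cap \cV(\ell_2) \cap \cV(\ell_3) \subset \P^3$ is a single point $w$. Now for each $j$, the equations $q_i(v_j) = h(v_j) \ell_i(v_j) = 0$ for $i=1,2,3$ force either $h(v_j) = 0$ or $v_j = w$ projectively. Since the $v_j$ are distinct, at most one of them can equal $w$, so at least six lie on the plane $\cV(h) \subset \P^3$ — in particular some four do, contradicting the genericity hypothesis that no four of the $v_j$ are coplanar.

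The point requiring care, and the main place where a careless proof could go astray, is not to jump immediately from ``$q_i = h \ell_i$'' to ``$\cV(h) \supset \{v_1,\ldots,v_7\}$''; one must account for the possibility that some $v_j$ instead lies on $\cV(\ell_i)$ for every $i$. The dimension count on $\P^3$ forces such a ``bad'' point to be unique, and distinctness of the $v_j$ saves at most one of them, which suffices. Aside from this, the structure parallels the proof of Lemma \ref{Lem:ReznickBezout}, and I do not anticipate any genuinely hard step.
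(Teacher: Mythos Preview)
Your proof is correct and follows essentially the same strategy as the paper: assume a nontrivial common factor, dispose of the degree $2$ case by linear dependence, and in the linear case obtain a contradiction with the ``no four coplanar'' hypothesis. The bookkeeping differs slightly: you exploit the linear independence of $\ell_1,\ell_2,\ell_3$ to pin their common projective zero down to a single point (so at least six of the $v_j$ must lie on the plane $\cV(h)$), whereas the paper runs the count in the other direction, first bounding the number of $v_j$ on $\cV(g)$ by three and then observing that at least four must land on each residual hyperplane $\cV(q_j')$. Your version is arguably cleaner, since it makes explicit the role of the linear independence of the cofactors.
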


\begin{proof}
Suppose $q_1,q_2,q_3$ have a common factor $g$. Then $q_j = g \cdot q_j'$ for $j \in \{1,2,3\}$ and $g,q_j'$ have to be linear in $\R[x_1,\ldots,x_4]$. Due to the genericity condition at most three zeros (w.l.o.g. $v_1,v_2,v_3$) are located on $\cV(g)$ since otherwise there would exist at least five points are contained in a plane. Hence, $\cV(q_1'),\cV(q_2')$ and $\cV(q_3')$ share four points which is a contradiction since for each $j$ all points in $\cV(q_j')$ are contained in a line.
\end{proof}

\begin{thm}
Let $p \in \partial P_{4,4}$ such that $p$ is not $SOS$. Let $A := \{v_1,\ldots,v_8\} \subset \R^4$ such that the genericity condition \eqref{Equ:GenCond2}  holds and $\cV(p) = \{v_1,\ldots,v_k\}$ with $1 \leq k \leq 7$  (i.e.: the projectivization of $A$ is the intersection of three coprime polynomials $q_1,q_2,q_3 \in H_{4,2}$). Then there exists a certificate $l_a: H_{4,4} \ra \R, f \mapsto \sum_{j = 1}^8 a_j f(v_j)$, $a := (a_1,\ldots,a_8) \in \R^8$ with respect to $A$ for $p$ to be not $SOS$, if the following inequality holds
\begin{eqnarray}
	(u_{k+1}^2 + \cdots + u_7^2) (p(v_{k+1}) + \cdots + p(v_7))	& <	& u_8^2 p(v_8). \label{Equ:OurRelaxation2}
\end{eqnarray}
Here the $u_j$ are given by the unique Cayley-Bacharach relation on $A$ and $l_a$ is an extreme ray of $\Sig_{4,4}^*$.
\label{Thm:Main2}
\end{thm}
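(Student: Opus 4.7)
The plan is to mirror the proof of Theorem \ref{Thm:Main}, replacing each ingredient by its quaternary-quadric analogue. First, starting from the zero set $\cV(p) = \{v_1,\ldots,v_k\}$ (which satisfies \eqref{Equ:GenCond2} since $p$ is not SOS and \eqref{Equ:GenCond2} is a necessary feature of any Cayley--Bacharach $8$-point configuration), I would pick additional points $v_{k+1},\ldots,v_7 \in \R^4$ in generic position so that the full seven-point set still satisfies \eqref{Equ:GenCond2}. By Lemma \ref{Lem:ReznickAnalogon}, the three-dimensional space of quaternary quadrics vanishing on $\{v_1,\ldots,v_7\}$ has a basis of pairwise coprime quadrics $q_1,q_2,q_3$. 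Bezout's theorem for three quadrics in $\P^3$ gives exactly $2\cdot 2\cdot 2 = 8$ intersection points (counted with multiplicity), producing an eighth point $v_8$; by Lemma \ref{Lem:NieDiscriminant}, for generic choices the intersection is transverse, so $v_8\notin\{v_1,\ldots,v_7\}$, and $v_8\in\R^4$ because the three quadrics are real (cf.\ \cite{Reznick:PSDandSOS}). Thus $A := \{v_1,\ldots,v_8\}$ is a valid Cayley--Bacharach configuration and the coefficients $u_1,\ldots,u_8$ of Lemma \ref{Lem:CB2} are all real and nonzero.

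Next I would invoke the $\Sig_{4,4}^*$-analogue of Theorem \ref{Thm:BlekhermanExtremeRay} (also due to Blekherman, \cite{Blekherman:PSDandSOS}): every choice of $a = (a_1,\ldots,a_8)\in\R^8$ with $a_1,\ldots,a_7 > 0$ and
\[
a_8 \ = \ \frac{-u_8^2}{\frac{u_1^2}{a_1} + \cdots + \frac{u_7^2}{a_7}} \ < \ 0
\]
yields an extreme ray $l_a : H_{4,4}\ra\R$, $f\mapsto \sum_{j=1}^{8} a_j f(v_j)$, of $\Sig_{4,4}^*$. Since $p$ vanishes on $v_1,\ldots,v_k$, the separation condition $l_a(p)<0$ becomes
\[
\sum_{j=k+1}^{7} a_j p(v_j) \ + \ a_8\, p(v_8) \ < \ 0,
\]
and substituting the expression for $a_8$ and clearing denominators transforms this into
\[
\Big(\sum_{i=1}^{k}\tfrac{u_i^2}{a_i}\Big)\Big(\sum_{j=k+1}^{7} a_j p(v_j)\Big) \ + \ \sum_{j=k+1}^{7} p(v_j)\Big(u_j^2 + \!\!\!\sum_{i\in\{k+1,\ldots,7\}\setminus\{j\}} \!\! \tfrac{a_j u_i^2}{a_i}\Big) \ < \ u_8^2\, p(v_8).
\]
Setting $a_{k+1} = \cdots = a_7 = 1$ and letting $a_1,\ldots,a_k \ra \infty$, the first sum tends to zero, and what remains is precisely the hypothesis \eqref{Equ:OurRelaxation2}. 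Hence under \eqref{Equ:OurRelaxation2} one can choose $a_1,\ldots,a_k$ sufficiently large to obtain $l_a(p)<0$, producing the desired separating extreme ray.

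For the fact that \eqref{Equ:OurRelaxation2} really certifies $p\notin\Sig_{4,4}$ (rather than merely producing a linear functional that happens to be negative on $p$), I would argue as in the proof of Theorem \ref{Thm:Main}: if some $g\in\Sig_{4,4}$ with $\cV(g)\supseteq\cV(p)$ satisfied \eqref{Equ:OurRelaxation2}, then the same construction would give $l_a(g)<0$ on an extreme ray of $\Sig_{4,4}^*$, contradicting the quaternary-quartic version of Theorem \ref{Thm:BlekhermanMain}. Alternatively, one can verify directly that for $g = \sum_i h_i^2$ the values $g(v_j) = \sum_i h_i(v_j)^2$ combined with the Cayley--Bacharach identity $\sum_j u_j h_i(v_j) = 0$ (applied to each linear form $h_i\in H_{4,2}$) force the reverse inequality via Cauchy--Schwarz.

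The main obstacle I anticipate is not conceptual but bookkeeping: one must be sure that Blekherman's extreme-ray formula truly has the analogous shape in the $(n,2d)=(4,4)$ case (a single negative coefficient, with the formula for $a_8$ exactly mirroring \eqref{Equ:a_9}), and that the genericity used in Lemmas \ref{Lem:ReznickAnalogon} and \ref{Lem:NieDiscriminant} is compatible with simultaneously fixing the $k$ zeros of $p$. The latter is essentially the same genericity issue already handled in the ternary sextic case, and the former is the content of the $\Sig_{4,4}^*$-statement of \cite{Blekherman:PSDandSOS}; modulo these citations, the proof is a direct transcription of the argument for Theorem \ref{Thm:Main} with $9\mapsto 8$ and $8\mapsto 7$ throughout.
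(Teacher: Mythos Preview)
Your proposal is correct and follows exactly the approach the paper takes: the paper's own proof consists of the single sentence ``The proof works the same way as for Theorem \ref{Thm:Main} with the obvious modifications,'' and you have carried out precisely those modifications ($9\mapsto 8$, $8\mapsto 7$, ternary cubics $\mapsto$ quaternary quadrics, Lemma \ref{Lem:ReznickBezout} $\mapsto$ Lemma \ref{Lem:ReznickAnalogon}). One tiny slip: in your final paragraph the $h_i\in H_{4,2}$ are quadratic, not linear, forms.
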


The proof works the same way as for Theorem \ref{Thm:Main} with the obvious modifications.

In fact, the proof of Theorem \ref{Thm:Main} already shows one possible way how to choose $a = (a_1,\ldots,a_9) \in \R^9$ to obtain a separating extreme ray $l_a$.

\begin{cor}
For $p \in \partial P_{3,6} \setminus \Sig_{3,6}$ and $A = \{v_1,\ldots,v_9\} \supset \cV(p)$ with \eqref{Equ:OurRelaxation} satisfied, one valid certificate is given by $a_1 = \cdots = a_k = N \in \R$ (for $N$ sufficiently large), $a_{k+1} = \cdots = a_8 = 1$ and $a_9$ given by \eqref{Equ:a_9}. For $p \in \partial P_{4,4} \setminus \Sig_{4,4}$ and $A = \{v_1,\ldots,v_8\} \supset \cV(p)$ with \eqref{Equ:OurRelaxation2} satisfied, one valid certificate is given by $a_1 = \cdots = a_k = N \in \R$ (for $N$ sufficiently large), $a_{k+1} = \cdots = a_7 = 1$ and $a_8$ given by analogon of \eqref{Equ:a_9} for $\Sig_{4,4}^*$ (see \cite{Blekherman:PSDandSOS}). 

In particular, $l_a$ is a rational certificate, i.e. every $a_j$ is rational, if every point $v_j \in A$ is rational.
\label{Cor:ExplicitRay}
\end{cor}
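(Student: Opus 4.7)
The plan is to extract explicitly the construction already performed inside the proof of Theorem \ref{Thm:Main}, check the sign conditions required by Theorem \ref{Thm:BlekhermanExtremeRay}, and then observe that every operation respects rationality. Recall that the proof of Theorem \ref{Thm:Main} sets $a_{k+1}=\cdots=a_8=1$, defines $a_9$ by \eqref{Equ:a_9}, and shows that $l_a(p)<0$ is equivalent to
\begin{eqnarray*}
\lam_{a_1,\ldots,a_k} + (u_{k+1}^2+\cdots+u_8^2)(p(v_{k+1})+\cdots+p(v_8)) & < & u_9^2 p(v_9),
\end{eqnarray*}
where $\lam_{a_1,\ldots,a_k}=\left(\sum_{j=1}^k u_j^2/a_j\right)(p(v_{k+1})+\cdots+p(v_8))$.

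First I would plug in $a_1=\cdots=a_k=N$, obtaining $\lam_N=\frac{1}{N}(u_1^2+\cdots+u_k^2)(p(v_{k+1})+\cdots+p(v_8))$, which tends monotonically to $0$ as $N\to\infty$. Since \eqref{Equ:OurRelaxation} is a strict inequality, there exists $N_0>0$ such that for every $N\geq N_0$ the displayed inequality holds, yielding $l_a(p)<0$. Next, I verify the sign conditions demanded by Theorem \ref{Thm:BlekhermanExtremeRay}: $a_1,\ldots,a_8>0$ by construction, while from \eqref{Equ:a_9} one computes $a_9=-u_9^2/\sum_{j=1}^8 u_j^2/a_j$, whose denominator is strictly positive because all $u_j\neq 0$ (Lemma \ref{Lem:CB}) and $a_1,\ldots,a_8>0$; hence $a_9<0$. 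Thus $a$ has exactly one negative entry and satisfies \eqref{Equ:a_9}, so $l_a$ spans an extreme ray of $\Sig_{3,6}^*$ by Theorem \ref{Thm:BlekhermanExtremeRay}, and this extreme ray separates $p$ from $\Sig_{3,6}$.

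For the rationality statement, observe that if every $v_j\in\mathbb{Q}^3$ then evaluating a rational monomial basis of $H_{3,3}$ at the $v_j$ produces a rational linear system whose unique (up to scaling) solution gives the Cayley-Bacharach coefficients $u_j$; hence $u_j\in\mathbb{Q}$. Choosing $N\in\mathbb{Q}_{>0}$ sufficiently large makes $a_1,\ldots,a_8\in\mathbb{Q}$, and then $a_9\in\mathbb{Q}$ by formula \eqref{Equ:a_9}, so $l_a$ is a rational certificate. The $P_{4,4}\setminus\Sig_{4,4}$ case is obtained by the identical argument, using Theorem \ref{Thm:Main2} and the corresponding relation from \cite{Blekherman:PSDandSOS} in place of \eqref{Equ:a_9}, with seven positive coordinates and $a_8$ playing the role of the single negative entry.

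There is essentially no obstacle beyond a clean bookkeeping: the only substantive point is the monotone convergence $\lam_N\searrow 0$, which makes the strict inequality \eqref{Equ:OurRelaxation} the only needed hypothesis for the construction to succeed, and the automatic sign of $a_9$, which holds because all $u_j$ are nonzero thanks to Lemma \ref{Lem:CB}.
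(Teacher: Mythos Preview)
Your proposal is correct and follows exactly the paper's approach: the corollary is stated there without proof (just a $\Box$), since it merely records the explicit choice of $a$ already made inside the proof of Theorem \ref{Thm:Main}, together with the straightforward rationality observation. Your write-up is a faithful unpacking of that construction, including the sign check for $a_9$ via \eqref{Equ:a_9} and the rationality of the $u_j$ from the linear Cayley-Bacharach system.
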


\vspace*{-0.8cm}

\begin{flushright}
$\Box$
\end{flushright}

If one is interested in computing rational certificates, then there is the following problem from an application point of view. Suppose, we have a rational variety $\cV(p) = \{v_1,\ldots,v_k\}$ and we choose $v_{k+1},\ldots,v_8 \in \Q^3$ (resp. $v_{k+1},\ldots,v_7 \in \Q^4$) such that the genericity condition \eqref{Equ:GenCond} (resp. \eqref{Equ:GenCond2}) holds (which is always possible). Then it is not clear a priori that the ninth intersection point $v_9 \in \R^3$ (resp. eighth intersection point $v_8 \in \R^4$), given by Bezout, is rational, too.

According to a recent preprint by Q. Ren \cite{Ren:CB}, for $p \in P_{3,6} \bs \Sig_{3,6}$, the ninth intersection point can always be computed exactly. In particular, it can be followed that $v_9$ will always be rational whenever $v_1,\ldots,v_8$ are rational and hence, whenever $\cV(p)$ is rational.

\begin{cor}
Let $p \in \partial P_{3,6} \setminus \Sig_{3,6}$ with $\cV(p) = \{v_1,\ldots,v_k\} \subset \Q^3$ and $\{v_{k+1},\ldots,v_8\} \subset \Q^3$ such that \eqref{Equ:GenCond} holds. Then there is a rational certificate $l_a$ on $A = \{v_1,\ldots,v_9\}$ with $v_9$ given by Bezout, whenever \eqref{Equ:OurRelaxation} holds.
\label{Cor:ExplicitRationalRay}
\end{cor}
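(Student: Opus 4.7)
The plan is to assemble this corollary directly from the ingredients already in place: the explicit rational construction in Corollary \ref{Cor:ExplicitRay}, the rationality of the ninth intersection point due to Ren, and the rationality of the Cayley--Bacharach coefficients for a rational nine-point configuration.

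First I would produce the ninth point. Since $v_1,\ldots,v_8\in\Q^3$ and condition \eqref{Equ:GenCond} holds, Lemma \ref{Lem:ReznickBezout} yields two coprime cubics $q_1,q_2\in H_{3,3}$ vanishing on these eight points; since these eight points have rational coordinates, a basis of the $2$-dimensional space of cubics through them can be chosen with rational coefficients, so $q_1,q_2\in\Q[x_1,x_2,x_3]$. By Ren's result cited just before the corollary, the ninth intersection point $v_9$ obtained from Bezout is rational, i.e.\ $v_9\in\Q^3$. After possibly enlarging the configuration slightly we may assume $v_9\notin\{v_1,\ldots,v_8\}$ as already guaranteed generically by Lemma \ref{Lem:NieDiscriminant}; the hypothesis that \eqref{Equ:OurRelaxation} holds automatically excludes the degenerate case.

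Next I would argue that the Cayley--Bacharach coefficients $u_1,\ldots,u_9$ are rational. As noted after Lemma \ref{Lem:CB2}, the $u_j$ are obtained by solving a linear system whose coefficient matrix is built from the evaluations of a basis of $H_{3,3}$ at the points $v_j$. Since all $v_j\in\Q^3$, this matrix is rational, and the solution (unique up to scaling by Lemma \ref{Lem:CB}) can be taken in $\Q^9$.

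Finally I would invoke Corollary \ref{Cor:ExplicitRay} with rational data. Since \eqref{Equ:OurRelaxation} is satisfied, the proof of Theorem \ref{Thm:Main} produces a valid certificate of the form $a_1=\cdots=a_k=N$, $a_{k+1}=\cdots=a_8=1$, with $a_9$ given by \eqref{Equ:a_9}, provided $N$ is chosen large enough so that $\lam_{a_1,\ldots,a_k}+(u_{k+1}^2+\cdots+u_8^2)(p(v_{k+1})+\cdots+p(v_8))<u_9^2 p(v_9)$. Because the relevant gap in \eqref{Equ:OurRelaxation} is strict, the set of admissible $N$ is an open half-line, so we may take $N\in\Q$. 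Then $a_1,\ldots,a_8\in\Q$, and from \eqref{Equ:a_9} the coefficient $a_9=-u_9^2/\!\left(\tfrac{u_1^2}{a_1}+\cdots+\tfrac{u_8^2}{a_8}\right)$ is a rational function of rational quantities, hence also rational. Therefore $l_a$ has all rational coefficients and evaluates $p$ at rational points, so it is a rational separating extreme ray for $p$.

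The only nontrivial step is the rationality of $v_9$, but this is exactly what Ren's theorem gives; everything else is a straightforward rationality bookkeeping on the construction already produced in Theorem \ref{Thm:Main} and Corollary \ref{Cor:ExplicitRay}.
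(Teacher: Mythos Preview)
Your proposal is correct and follows exactly the route the paper intends: the paper's own proof is just a $\Box$, with the content being the paragraph immediately preceding the corollary (Ren's result gives $v_9\in\Q^3$) together with the last sentence of Corollary~\ref{Cor:ExplicitRay} (rational points yield a rational $l_a$). You have simply spelled out the rationality bookkeeping---rational basis for the cubics through eight rational points, rational Cayley--Bacharach coefficients from a rational linear system, rational choice of $N$, and rationality of $a_9$ via \eqref{Equ:a_9}---that the paper leaves implicit.
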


\vspace*{-0.8cm}

\begin{flushright}
$\Box$
\end{flushright}

Note that in our Theorems \ref{Thm:Main} and \ref{Thm:Main2} we only consider real points $v_1,\ldots,v_9$ whereas in Blekherman's Theorem \ref{Thm:BlekhermanMain} (at most) one pair of complex conjugated points are allowed. Anyhow, Blekherman conjectures in \cite{Blekherman:PSDandSOS} that all extreme rays of the dual cones $\Sig_{3,6}^*$ and $\Sig_{4,4}^*$ can be described by nine (resp. eight) real points. Based on our results, we formulate a slightly different conjecture here.

\begin{conj}
For $p \in P_{3,6} \setminus \Sig_{3,6}$ there exist $v_1,\ldots,v_9 \in \R^3$ yielding a separating extreme ray for $p$ in the sense of Theorem \ref{Thm:BlekhermanMain}. Analogously, for $p \in P_{4,4} \setminus \Sig_{4,4}$.
\label{Conj:9realpoints}
\end{conj}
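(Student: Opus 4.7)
The plan is to combine the constructive machinery of Theorems \ref{Thm:Main} and \ref{Thm:Main2} with a real enumerative deformation argument. Rather than attempting to modify Blekherman's original configuration directly, the idea would be to produce, for each $p \in P_{3,6} \setminus \Sig_{3,6}$ (resp.\ $p \in P_{4,4} \setminus \Sig_{4,4}$), a fully real $9$-point (resp.\ $8$-point) configuration $A$ satisfying the genericity condition \eqref{Equ:GenCond} (resp.\ \eqref{Equ:GenCond2}) together with the Cayley-Bacharach inequality \eqref{Equ:OurRelaxation} (resp.\ \eqref{Equ:OurRelaxation2}). Any such $A$ automatically produces a separating extreme ray in the sense of Theorem \ref{Thm:BlekhermanMain} with all nine (resp.\ eight) points real, so the conjecture would follow.

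I would proceed by a case split on the number of real zeros $k = |\cV(p)|$. For the ternary sextic case, the range $k \geq 7$ is essentially covered: once $k \geq 8$ any generic real completion works by the discussion preceding Theorem \ref{Thm:Main}, and the case $k = 7$ is handled by Theorem \ref{Thm:7PointCase}. For $1 \leq k \leq 6$, starting from an arbitrary generic real extension $\{v_{k+1}, \dots, v_8\} \subset \R^3$ of $\cV(p)$ satisfying \eqref{Equ:GenCond}, one obtains $v_9$ by Bezout via Lemma \ref{Lem:ReznickBezout}, and must verify that the extension can be chosen so that \eqref{Equ:OurRelaxation} holds. For $k = 0$, I would first apply the numerical boundary perturbation from Subsection \ref{Subsec:Blekhermanresults}, replacing $p$ by $p + \lambda r$ for $\lambda$ minimal, and use the common real zeros of the three squares $s_1^2 + s_2^2 + s_3^2$ as a starting real point set before returning to $p$ by continuity.

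The core of the argument would be a real deformation inside the $3$-dimensional kernel $W_l$ of the quadratic form $Q_l$ associated to Blekherman's separating extreme ray. Concretely, I would vary the pencil of cubics $\{\alpha q_1 + \beta q_2 : [\alpha:\beta] \in \P^1\}$ inside $W_l$, equivalently moving inside the Grassmannian $\mathrm{Gr}(2, W_l)$. Inequality \eqref{Equ:OurRelaxation} is an open condition on the point configuration, and the loci violating \eqref{Equ:GenCond} or producing non-transverse intersections form proper Zariski-closed subsets by Lemmas \ref{Lem:ReznickBezout} and \ref{Lem:NieDiscriminant}; the Cayley-Bacharach coefficients $u_j$ and the intersection points themselves depend continuously on the pencil. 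The goal of the deformation is then to reach a chamber of $\mathrm{Gr}(2, W_l)$ corresponding to nine real intersection points while preserving \eqref{Equ:OurRelaxation} and the sign pattern required by Theorem \ref{Thm:BlekhermanExtremeRay}.

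The main obstacle, and the reason this statement remains conjectural, is precisely the real enumerative step: a pencil of real ternary cubics can intersect in $1, 3, 5, 7$, or $9$ real points, and it is not obvious that the $9$-real-point chamber is reachable from within the restricted subspace $W_l$. Making this rigorous would require a careful analysis of the real discriminant stratification of pencils cut out by $W_l$, perhaps combined with a degeneration argument in which one of the $q_i$ is allowed to split into a product of lower-degree forms (pushing complex intersection points onto a real curve), or a continuity argument bridging from the cases $k \geq 7$ where the configuration is already known to be real. The analogous question for quaternary quadrics and the Grassmannian $\mathrm{Gr}(3, W_l')$ would be attacked in the same spirit, but the real enumerative geometry of three intersecting quadrics in $\P^3$ is where I would expect the proof to be most delicate.
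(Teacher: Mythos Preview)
This statement is Conjecture~\ref{Conj:9realpoints} in the paper and is \emph{not} proved there; it is explicitly left open. The only partial progress the paper records is the corollary to Theorem~\ref{Thm:7PointCase}, which settles the cases $\#\cV(p) \geq 7$ for $P_{3,6}$ and $\#\cV(p) \geq 6$ for $P_{4,4}$. You correctly isolate those cases as already handled and are candid that your deformation strategy for the remaining cases has an unresolved real-enumerative obstruction, so your proposal is not a proof either --- and there is no proof in the paper to compare it against.

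Two concrete issues with your outline beyond the gap you acknowledge. First, in the $k = 0$ case you propose to use ``the common real zeros of the three squares $s_1^2 + s_2^2 + s_3^2$'' coming from the boundary SDP, but the paper states explicitly that $s_1, s_2, s_3$ have \emph{no} common zeros when $p + \lambda r$ is strictly positive; the nine points in that numerical approach arise from intersecting a suitable linear combination of \emph{two} of the $s_i$, not from common zeros of all three, so this step does not furnish a starting real point set. Second, your deformation lives in $\mathrm{Gr}(2, W_l)$ for a \emph{fixed} extreme ray $l$, but $W_l$ is only $3$-dimensional, so $\mathrm{Gr}(2, W_l) \cong \P^2$ is a very thin parameter space; there is no a priori reason the chamber of pencils with nine real intersection points meets it at all. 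To gain room you would have to vary $l$ itself among separating extreme rays, but then you must simultaneously certify that the deformed functional remains extreme in $\Sig_{3,6}^*$ and still separates $p$ --- neither of which is controlled by the continuity argument you sketch. These are genuine obstructions, not just technicalities, and they are precisely why the paper leaves the statement as a conjecture.
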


Clearly, Blekhermans conjecture implies ours, since if every extreme ray is real representable, every nonnegative polynomial that is not SOS can be separated by a real intersection. But it is unclear whether the two variants are equivalent or not and, if not, whether our conjecture is a strong relaxation of Blekherman's.

\section{The seven point case}
\label{Sec:7Points}

Let $p \in P_{3,6} \setminus \Sig_{3,6}$ and assume we are interested in finding a separating extreme ray $l_a$ in $\Sig_{3,6}^*$ for $p$. That means we need to find a generic 9-point set $A$ being the intersection of the varieties of two coprime polynomials $q_1,q_2 \in H_{3,3}$ such that the conditions in Theorem \ref{Thm:BlekhermanMain} hold. If $p$ is located on the boundary of $P_{3,6}$, i.e. $\cV(p) = \{v_1,\ldots,v_k\} \neq \emptyset$, then Theorem \ref{Thm:Main} and Corollary \ref{Cor:ExplicitRay} yield a certificate $l_a$ whenever one can fill up $\cV(p)$ with points $v_{k+1},\ldots,v_9$ such that condition \eqref{Equ:OurRelaxation} is satisfied.

Anyhow, it is not obvious a priori if resp. how points $v_{k+1},\ldots,v_9$ can be chosen such that the sufficient condition \eqref{Equ:OurRelaxation} of Theorem \ref{Thm:Main} holds (note that $v_9$ is always given by Bezout in this approach). It turns out that for $k = 7$, i.e. the easiest non-trivial case, the problem of choosing an appropriate $v_8$ is easy since almost every $v_8$ yields a $v_9$ such that \eqref{Equ:OurRelaxation} is satisfied. Note that for $p \in \partial P_{3,6} \setminus \Sig_{3,6}$ the variety $\cV(p)$ always satisfies the genericity condition \eqref{Equ:GenCond} (see \cite{Reznick:PSDandSOS}).

One reason why this case might be of special interest is that $k = 7$ with $v_1,\ldots,v_7$ satisfying the genericity condition \eqref{Equ:GenCond} is the smallest number of zeros of a nonnegative polynomial such that the dimensional difference between exposed faces of $P_{3,6}$ and $\Sig_{3,6}$ given by the vanishing of polynomials on these zeros is strictly positive (see \cite{Blekherman:Dimension}).

Theorem \ref{Thm:7PointCase} holds analogously in the case of $p \in P_{4,4} \setminus \Sig_{4,4}$ and $k = 6$ with the obvious modifications. We omit to formulate the result for this case separately.

\begin{thm}
Let $p \in \partial P_{3,6} \setminus \Sig_{3,6}$. Let $A := \{v_1,\ldots,v_9\} \subset \R^3$ such that the genericity condition \eqref{Equ:GenCond} holds and $\cV(p) = \{v_1,\ldots,v_7\}$. Then there exists a certificate $l_a: H_{3,6} \ra \R, f \mapsto \sum_{j = 1}^9 a_j f(v_j)$, $a_j \in \R$ with respect to $A$ for $p$ to be not $SOS$ if 
\begin{eqnarray}
      u_8^2 p(v_8) & \neq	& u_9^2 p(v_9). \label{Equ:7PointEquality}
\end{eqnarray}
Furthermore, $l_a$ is an extreme ray of $\Sig_{3,6}^*$.
\label{Thm:7PointCase}
\end{thm}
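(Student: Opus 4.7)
The plan is to apply Theorem~\ref{Thm:Main} and exploit the freedom in which point of $A$ carries the (unique) negative coefficient of the resulting extreme ray.

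Since $\cV(p) = \{v_1,\ldots,v_7\}$, for any weight vector $a = (a_1,\ldots,a_9) \in \R^9$ we have
\begin{eqnarray*}
l_a(p) \ = \ \sum_{j=1}^9 a_j \, p(v_j) \ = \ a_8 \, p(v_8) + a_9 \, p(v_9),
\end{eqnarray*}
because $p(v_j) = 0$ for $1 \leq j \leq 7$. Moreover, since $p \in P_{3,6}$ and $v_8, v_9 \notin \cV(p)$, both $p(v_8)$ and $p(v_9)$ are strictly positive, and by Lemma~\ref{Lem:CB} the Cayley--Bacharach coefficients $u_8$ and $u_9$ are nonzero.

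The first thing I would check is whether the specialization of condition \eqref{Equ:OurRelaxation} to $k=7$, namely $u_8^2 \, p(v_8) < u_9^2 \, p(v_9)$, holds. If it does, Theorem~\ref{Thm:Main} together with Corollary~\ref{Cor:ExplicitRay} immediately produces the desired separating extreme ray $l_a$, taking $a_1 = \cdots = a_7 = N$ sufficiently large, $a_8 = 1$, and $a_9$ determined by \eqref{Equ:a_9}. If instead the reversed strict inequality $u_9^2 \, p(v_9) < u_8^2 \, p(v_8)$ holds, I would simply relabel $v_8 \leftrightarrow v_9$. Since Theorem~\ref{Thm:BlekhermanExtremeRay} makes no distinguished choice for which coefficient $a_k$ is the unique negative one, and since formula \eqref{Equ:a_9} is symmetric under permutation of its indices, the proof of Theorem~\ref{Thm:Main} goes through verbatim with $a_8$ (rather than $a_9$) taken to be the negative coefficient; the sufficient condition then becomes exactly $u_9^2 \, p(v_9) < u_8^2 \, p(v_8)$, which holds by assumption.

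By hypothesis \eqref{Equ:7PointEquality}, $u_8^2 \, p(v_8) \neq u_9^2 \, p(v_9)$, so exactly one of the two strict inequalities above must hold, and in either case we obtain an extreme ray of $\Sig_{3,6}^*$ separating $p$ from the SOS cone. The main point requiring care is the relabeling step: one must check that the $\lambda_{a_1,\ldots,a_k} \searrow 0$ limit argument in the proof of Theorem~\ref{Thm:Main} is genuinely symmetric under interchange of the two ``non-zero-of-$p$'' indices $8$ and $9$, so that no asymmetry is hidden by the notational convention of always putting the negative coefficient last. This is clear from the symmetric form of \eqref{Equ:a_9}, but it is the only subtlety behind the short hypothesis \eqref{Equ:7PointEquality}. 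The equality case $u_8^2 \, p(v_8) = u_9^2 \, p(v_9)$ is the unique obstruction, corresponding exactly to $l_a(p) = 0$ in the limit, and thus must be excluded.
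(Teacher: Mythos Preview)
Your proposal is correct and follows essentially the same approach as the paper: specialize Theorem~\ref{Thm:Main} to $k=7$, obtaining the sufficient condition $u_8^2 p(v_8) < u_9^2 p(v_9)$, and then observe that swapping the roles of $v_8$ and $v_9$ handles the reversed inequality, so that only the equality case is excluded. The paper phrases the swap as ``$\{v_1,\ldots,v_7,v_9\}$ also yields $A$ with Bezout's theorem,'' while you justify it via the index symmetry in \eqref{Equ:a_9} and Theorem~\ref{Thm:BlekhermanExtremeRay}; these are the same observation, and your version spells out a bit more detail.
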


\begin{proof}
We choose $v_8$ such that the genericity condition \eqref{Equ:GenCond} still holds for $\{v_1,\ldots,v_8\}$ and obtain a real $v_9 \notin \{v_1,\ldots,v_8\}$ such that \eqref{Equ:GenCond} and Cayley-Bacharach holds for $A$ generically (see proof of Theorem \ref{Thm:Main}). Since all conditions of Theorem \ref{Thm:Main} are satisfied, there is a certificate $l$ if $u_8^2 p(v_8) < u_9^2 p(v_9)$. But since $\{v_1,\ldots,v_7,v_9\}$ also yields $A$ with Bezout's theorem, this condition holds w.l.o.g. as long as $u_8^2 p(v_8) \neq u_9^2 p(v_9)$.
\end{proof}

\begin{cor}
Conjecture \ref{Conj:9realpoints} holds for $p \in \partial P_{3,6} \setminus \Sig_{3,6}$ with $\#\cV(p) \geq 7$ and $p \in \partial P_{4,4} \setminus \Sig_{4,4}$ with $\#\cV(p) \geq 6$.
\end{cor}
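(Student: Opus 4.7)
The plan is to reduce both cases to Theorem \ref{Thm:7PointCase} and its $(n,2d)=(4,4)$ analogue (mentioned just before that theorem). I focus on the ternary sextic case; the quaternary quartic version with $\#\cV(p)\geq 6$ proceeds identically, with Lemma \ref{Lem:ReznickAnalogon} replacing Lemma \ref{Lem:ReznickBezout} and six zeros replacing seven.

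Suppose $p\in\partial P_{3,6}\setminus\Sig_{3,6}$ with $\#\cV(p)\geq 7$. First I select any seven zeros $v_1,\dots,v_7\in\cV(p)$; these automatically satisfy the genericity condition \eqref{Equ:GenCond} since $\cV(p)$ does (see \cite{Reznick:PSDandSOS}). Next I choose an auxiliary point $v_8\in\R^3\setminus\cV(p)$ in sufficiently general position that \eqref{Equ:GenCond} still holds on $\{v_1,\dots,v_8\}$, that the two coprime cubics $q_1,q_2$ vanishing on $v_1,\dots,v_8$ (Lemma \ref{Lem:ReznickBezout}) meet transversely, and that the Bezout ninth intersection point $v_9$ is distinct from $v_1,\dots,v_8$ (Lemma \ref{Lem:NieDiscriminant}) and real (because $q_1,q_2$ are real with eight real common zeros, see \cite{Reznick:PSDandSOS}). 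Each requirement cuts out at most a proper algebraic subset of choices, and $\cV(p)$ is finite, so such a $v_8$ exists.

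To apply Theorem \ref{Thm:7PointCase} to the configuration $A=\{v_1,\dots,v_9\}$ (treating $v_1,\dots,v_7$ as the seven zeros of $p$ inside $A$), I must verify the hypothesis \eqref{Equ:7PointEquality}: $u_8^2p(v_8)\neq u_9^2p(v_9)$. The Cayley-Bacharach coefficients $u_8,u_9$ are nonzero by Lemma \ref{Lem:CB}, and $p(v_8)>0$ since $v_8\notin\cV(p)$. Two subcases arise. If $v_9\in\cV(p)$ --- which can happen when $p$ has more than seven zeros, since up to three ``spare'' zeros are available --- then $p(v_9)=0$ while $p(v_8)>0$, and \eqref{Equ:7PointEquality} holds trivially. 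Otherwise $p(v_9)>0$, and $u_8^2p(v_8)=u_9^2p(v_9)$ defines a proper algebraic subset in the space of choices of $v_8$ (since $u_8,u_9$ and $v_9$ all depend algebraically on $v_8$), which is avoided by a further generic perturbation. In either subcase Theorem \ref{Thm:7PointCase} produces a separating extreme ray on the nine real points of $A$ in the sense of Theorem \ref{Thm:BlekhermanMain}, confirming Conjecture \ref{Conj:9realpoints} for $p$.

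The step requiring the most care is the generic perturbation in the subcase $v_9\notin\cV(p)$: one must know that $u_8^2p(v_8)=u_9^2p(v_9)$ is not identically satisfied as $v_8$ varies, so that avoiding it is meaningful. This is not a genuine obstacle, because a single $v_8$ off the candidate hypersurface exists by virtue of the fact that Theorem \ref{Thm:7PointCase} gives honest certificates for generic inputs; but it is the one genericity argument that would need to be spelled out carefully. Everything else in the proof is bookkeeping around the lemmas and theorems already established in the excerpt.
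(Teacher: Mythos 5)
The paper states this corollary with no proof at all (it follows Theorem~\ref{Thm:7PointCase} immediately), so there is no explicit argument in the paper to compare against; your proposal is an attempt to fill in what the authors apparently regard as obvious, and it takes the natural route: reduce to Theorem~\ref{Thm:7PointCase} (and its $(4,4)$ analogue) by choosing $v_8$ generically.

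There are two concrete issues. First, a technical one: Theorem~\ref{Thm:7PointCase} requires $\cV(p)=\{v_1,\dots,v_7\}$ \emph{exactly}, so it does not apply verbatim when $\#\cV(p)>7$. Your plan of selecting ``any seven zeros'' needs to either fall back to Theorem~\ref{Thm:Main} with $k=\min(\#\cV(p),8)$ (taking the extra zeros of $p$ as some of the $v_{k+1},\dots,v_8$ would violate the requirement $p(v_j)>0$ for $j>k$ that the proof of Theorem~\ref{Thm:Main} implicitly uses), or to observe that the proofs of Theorems~\ref{Thm:Main} and~\ref{Thm:7PointCase} only use $\{v_1,\dots,v_k\}\subseteq\cV(p)$ together with $v_{k+1},\dots,v_9\notin\cV(p)$; this relaxation is true but should be stated. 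Second, and more importantly: the heart of the argument is your claim that $u_8^2\,p(v_8)=u_9^2\,p(v_9)$ cuts out a \emph{proper} subvariety in the space of admissible $v_8$. You recognize this is the crux, but the justification you offer --- that ``Theorem~\ref{Thm:7PointCase} gives honest certificates for generic inputs'' --- is circular: Theorem~\ref{Thm:7PointCase} yields a certificate \emph{only when} the non-equality already holds, so it says nothing about whether the non-equality is generic. A non-circular argument would have to show that the identity $h(v_9)^2\,p(v_8)=h(v_8)^2\,p(v_9)$ (where $h$ is as in Corollary~\ref{Cor:7PointCaseEasyCond}) cannot hold as $v_8$ ranges over an open set --- for instance by arguing that such an identity would force $p$ to be proportional to $h^2$, hence a square and therefore SOS, contradicting $p\notin\Sig_{3,6}$ --- but this step is not carried out and is not trivial, since $v_9$ depends on $v_8$ via the Cayley--Bacharach correspondence. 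This genericity argument is the substantive content of the corollary, and as written it is missing.
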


Let $p \in P_{3,6}$ such that $p$ is not $SOS$ with $\cV(p) = \{v_1,\ldots,v_k\} \subset \R^3$. Assume that we want to test for a large amount of point sets $\{v_{k+1},\ldots,v_8\}$ in $\R^3$ if \eqref{Equ:OurRelaxation} can be satisfied for each point set (under the condition that \eqref{Equ:GenCond} holds for $v_1,\ldots,v_8$). We have to do two steps. Firstly, we have to compute $v_9$ by calculating two coprime polynomials $q_1,q_2$ vanishing on $\{v_1,\ldots,v_8\}$ via solving a system of linear equations on the coefficients of a ternary cubic and then calculate their Gr\"obner basis. If we are in the case $(n,2d) = (3,6)$, then this can also be done by using the formula given in \cite{Ren:CB}. Secondly, we have to compute the Cayley-Bacharach relation on $A := \{v_1,\ldots,v_9\}$ via solving a system of linear equations again.

In the seven point case, i.e. if we want to check \eqref{Equ:7PointEquality} for a large amount of point sets, we can avoid calculating the Cayley-Bacharach relation for each $\{v_8,v_9\}$ if we are not interested in computing the separating hyperplane given by $l_a$ explicitly. The trick is to use Reznick's Lemma \ref{Lem:ReznickSeven}. Let $V(\cV(p)) := \{h \in H_{3,3} : \cV(p) \subseteq \cV(h)\}$ be the vector space of all real ternary cubics vanishing on $\cV(p)$. By Lemma \ref{Lem:ReznickSeven} there is a basis $\{h_1,h_2,h_3\}$ of $V(\cV(p))$ such that $\cV(h_1) \cap \cV(h_2) \cap \cV(h_3) = \cV(p)$ and $h_1,h_2,h_3$ do not depend on the choice of $v_8$ and $v_9$.

\begin{cor}
Let $p$ and $A$ be as in Theorem \ref{Thm:7PointCase}. Let $h \in V(\cV(p))$ with $\cV(h) = \cV(p)$. Then there exists a certificate $l_a: H_{3,6} \ra \R, f \mapsto \sum_{j = 1}^9 a_j f(v_j)$, $a_j \in \R$ with respect to $A$ for $p$ to be not $SOS$ if 
\begin{eqnarray*}
      h^2(v_9) p(v_8) \ \neq \ h^2(v_8) p(v_9).
\end{eqnarray*}
Furthermore, $l_a$ is an extreme ray of $\Sig_{3,6}^*$.
\label{Cor:7PointCaseEasyCond}
\end{cor}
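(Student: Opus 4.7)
The plan is to reduce the hypothesis of this corollary directly to the sufficient condition $u_8^2 p(v_8) \neq u_9^2 p(v_9)$ of Theorem \ref{Thm:7PointCase}. The key observation is that the Cayley-Bacharach relation applied to the single cubic $h$ itself determines the ratio $u_8^2 / u_9^2$ explicitly in terms of values of $h$ at $v_8$ and $v_9$, which lets us eliminate the Cayley-Bacharach coefficients entirely from the inequality.

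Concretely, I would first check the non-vanishing required to divide safely. By Lemma \ref{Lem:CB} we have $u_j \neq 0$ for every $j$; in particular $u_9 \neq 0$. Since the nine points $v_1,\ldots,v_9$ of $A$ are distinct and $\cV(p) = \{v_1,\ldots,v_7\}$, the points $v_8$ and $v_9$ lie outside $\cV(p) = \cV(h)$, so $h(v_8), h(v_9) \neq 0$. Next, plug $h$ into the Cayley-Bacharach relation \eqref{Equ:CB}: because $h$ vanishes on $v_1,\ldots,v_7$, all seven terms drop out, leaving
\begin{equation*}
   u_8 h(v_8) + u_9 h(v_9) \ = \ 0.
\end{equation*}
Squaring gives $u_8^2 h(v_8)^2 = u_9^2 h(v_9)^2$, equivalently
\begin{equation*}
   \frac{u_8^2}{u_9^2} \ = \ \frac{h(v_9)^2}{h(v_8)^2}.
\end{equation*}

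Using this identity, the condition $u_8^2 p(v_8) \neq u_9^2 p(v_9)$ of Theorem \ref{Thm:7PointCase} becomes, after multiplying through by $h(v_8)^2 / u_9^2 \neq 0$,
\begin{equation*}
   h^2(v_9)\, p(v_8) \ \neq \ h^2(v_8)\, p(v_9),
\end{equation*}
which is precisely the hypothesis of the corollary. Applying Theorem \ref{Thm:7PointCase} then yields a certificate $l_a$ that is, moreover, an extreme ray of $\Sig_{3,6}^*$.

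There is essentially no obstacle: the only subtlety is ensuring $h(v_8), h(v_9) \neq 0$, and that follows immediately from $\cV(h) = \cV(p)$ together with the fact that $A$ is a nine-element set containing the seven zeros. The payoff, as emphasized in the surrounding discussion, is computational: when one scans many candidates $(v_8, v_9)$ for a fixed $p$, the cubic $h \in V(\cV(p))$ depends only on $\cV(p)$ (via Lemma \ref{Lem:ReznickSeven}) and can be computed once, avoiding the linear system for the Cayley-Bacharach coefficients $u_j$ for each new pair $(v_8, v_9)$.
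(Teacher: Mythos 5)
Your proof is correct and follows essentially the same route as the paper's: apply the Cayley--Bacharach relation to the cubic $h$ so that only the $v_8$ and $v_9$ terms survive, square to get $u_8^2 h^2(v_8) = u_9^2 h^2(v_9)$, and use this to translate the sufficient condition of Theorem \ref{Thm:7PointCase} into the $h$-form. The only difference is cosmetic: you make explicit the non-vanishing facts ($u_j \neq 0$ from Lemma \ref{Lem:CB} and $h(v_8), h(v_9) \neq 0$ since $v_8, v_9 \notin \cV(h)$) that the paper uses silently when passing between the two inequalities.
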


\begin{proof}
By Lemma \ref{Lem:ReznickSeven} there is a $h \in V(\cV(p))$ with $\cV(h) = \cV(p)$. Since $A$ satisfies \eqref{Equ:GenCond} and $h \in H_{3,3}$ the Cayley-Bacharach relation \eqref{Equ:CB} holds for $h$. Since $\cV(h) = \cV(p) = \{v_1,\ldots,v_7\}$ this means $u_8 h(v_8) + u_9 h(v_9) = 0$ and thus $u_8^2 h^2(v_8) = u_9^2 h^2(v_9)$. Hence, 
\begin{eqnarray*}
      u_8^2 p(v_8) \ \neq \ u_9^2 p(v_9)	& \Lera 	& h^2(v_9) p(v_8) \ \neq \ h^2(v_8) p(v_9).
\end{eqnarray*}
The assertion follows with Theorem \ref{Thm:7PointCase}.
\end{proof}

If we want to compute a specific $l_a$ the approach of Corollary \ref{Cor:7PointCaseEasyCond} does not suffice since by \eqref{Equ:a_9} we need to know the coefficients $u_j$ of the Cayley-Bacharach relation \eqref{Equ:CB} on $A$ to compute the scalar $a_9$ for $l_a$.\\

In the appendix we present an example of our method applied to a nonnegative polynomial with exactly seven zeros that is not SOS.

\section{An application: the Motzkin polynomial}
\label{Sec:Motzkin}
In this section we demonstrate applications of our method. It turns out that finding a separating extreme ray becomes more difficult for polynomials in $p \in \partial P_{3,6} \setminus \Sig_{3,6}$ with six or less zeros. The condition \eqref{Equ:OurRelaxation} provides more degrees of freedom and, in particular, the left hand side of this inequality has more than one summand. This fact yields that the set of point configurations $A := \{v_1,\ldots,v_9\} \subset \R^3$ with $\cV(p) \subset A$ which do not satisfy \eqref{Equ:OurRelaxation} will not be a lower dimensional subset in general -- in contrast to the seven point case (independently from which point corresponds to the negative entry $a_9$ in an extreme ray). The same difficulties arise for  $p \in P_{4,4} \setminus \Sig_{4,4}$ with five or less zeros.\\

Exemplarily, we investigate the Motzkin polynomial
\begin{eqnarray*}
	m(x,y,z) & =	&x^4y^2+x^2y^4-3x^2y^2z^2+z^6.
\end{eqnarray*}
% % % 
% % % We consider two point configurations -- one leading to a rational separating extreme ray and the other one not. Both point configurations make use of the symmetry of the Motzkin polynomial yielding Gr\"obner basis polynomials with seemingly lower degrees. 

The Motzkin polynomial has six zeros
$$
\begin{array}{lll}
	v_1 \ := \ (1,0,0), & v_2 \ := \ (0,1,0), & v_3 \ := \ (1,1,1), \\
	v_4 \ := \ (-1,1,1), &  v_5 \ := \ (1,-1,1), &  v_6 \ := \ (1,1,-1). \\
\end{array}
$$

As a first instance we choose $v_7 := (0,4,1)$ and $v_8 := (4,0,1)$. These eight points satisfy the genericity condition \eqref{Equ:GenCond}. This can be checked by looking at the $3 \times 3$ minors of the $8 \times 3$ matrix given by the coordinates $(x,y,z)$ of the points $v_1,\ldots,v_8$ and looking at the $6 \times 6$ minors of the $8 \times 6$ matrix given by the coordinates $(x^2,y^2,z^2,xy,xz,yz)$ of the points $v_1,\ldots,v_8$. Hence we can compute two coprime ternary cubics
\begin{eqnarray*}
	q_1 & =	 & -16 z^3+15 x^2 z+y^2 z+56 y^2 x-56 z^2 x \\
	q_2 & =	 & -4 z^3-x^2 y+4 x^2 z+15 y^2 x-15 z^2 x+z^2 y
\end{eqnarray*}
vanishing on $v_1,\ldots,v_8$ by solving the system of linear equations
\begin{eqnarray*}
	h(v_1) \ = \ 0, \ldots, h(v_8) \ = \ 0
\end{eqnarray*}
on the coefficients of $h$, where  
\begin{eqnarray*}
	  h	& :=	& b_1 x^3 + b_2 y^3 + b_3 z^3 + b_4 x^2 y + b_5 x^2 z + b_6 y^2 z + b_7 y^2 x + b_8 z^2 x + b_9 z^2 y + b_{10} x y z
\end{eqnarray*}
again. We compute the Gr\"obner basis
{\small
\begin{eqnarray*}
	& & \{7 z-26 z^2-15 z^3+26 z^4+8 z^5, -2 z+8 z^2+2 z^3-105 y-8 z^4+105 z^2 y,\\ 
	& & 8 z^4-422 z^3+1575 y^2-1583 z^2+422 z, x-1\}
\end{eqnarray*}
}
of $q_1$, $q_2$ and $x - 1$ with lexicographic ordering. We obtain $v_9 = (1, 1, -7/2)$ and compute the Cayley-Bacharach coefficients $u_j$ by solving the system of linear equations
\begin{eqnarray*}
	u_1 h(v_1) + \cdots + u_9 h(v_9) & =	& 0
\end{eqnarray*}
in $u_1,\ldots,u_9$. The solution is (up to multiplicity)
\begin{eqnarray*}
	  u 	& = & \lf(-64,-64, -\frac{40}{9}, -4, -4, \frac{24}{5}, 1, 1, \frac{118098}{5}\ri)^T
\end{eqnarray*}

We have
\begin{eqnarray*}
	(u_7^2 + u_8^2) \cdot (m(v_7) + m(v_8))	& = & 4, \\
	u_9^2 m(v_9) & =	& 228.
\end{eqnarray*}

Hence, the condition \eqref{Equ:OurRelaxation} of Theorem \ref{Thm:Main} is satisfied and we find a separating hyperplane for $m$ on $A$. According to Corollary \ref{Cor:ExplicitRay} we choose $a_1,\ldots,a_6 := 100$ and $a_7,a_8 := 1$. By \eqref{Equ:a_9} we obtain
\begin{eqnarray*}
	a_9 \ = \ \frac{-u_9^2}{\frac{u_1^2}{a_1}+\dots+\frac{u_8^2}{a_8}} \ = \ \frac{-14121476824050}{2143157}.
\end{eqnarray*}
We check the correctness of our result by
\begin{eqnarray*}
	l_a(m)	& =	& a_1 m(v_1) + \cdots + a_9 m(v_9) \\
		& =	& m(0,4,1) + m(4,0,1) - \frac{14121476824050}{2143157} \cdot m\lf(1,1,-\frac{7}{2}\ri) \\
		& =	& -\frac{1484936}{2143157} \ < \ 0.
\end{eqnarray*}
Thus, by Blekherman's Theorem \ref{Thm:BlekhermanExtremeRay}, $l_a$ is a (rational) extreme ray of $\Sig_{3,6}^*$ separating the Motzkin polynomial $m$ from the SOS cone.\\

In contrast to the seven point case, not every generic point configuration yields a separating certificate. For example, with the same approach it is easy to show that the instance $v_7 := (2/7,2/3,1)$ and $v_8 := (2/3,2/7,1)$ does not satisfy the condition \eqref{Equ:OurRelaxation} (see Appendix \ref{App:Motzkin}).

We show that for a symmetric choice of $v_7, v_8$, i.e. $v_7 = (q,s,1), v_8 = (s,q,1)$ with $q,s \in \R$, the set
\begin{eqnarray*}
	S	& :=	& \{(q,s) \in \R^2 \ : \ (u_7^2 + u_8^2) \cdot (m(v_7) + m(v_8)) < u_9^2 m(v_9)\}
\end{eqnarray*}
yieding a 9-point configuration which satisfies \eqref{Equ:OurRelaxation} is full dimensional with some nice geometric structure (see Figure \eqref{Fig:Motzkin}).

With the formula in \cite{Ren:CB} we obtain the ninth Cayley-Bacharach point
{\small\begin{eqnarray*}
	v_9	& =	& 
	\frac{1}{n(q,s)} \cdot \lf(\begin{array}{c}
	q^3s+2q^2s^2-q^2+s^3q-2sq-s^2 \\
	q^3s+2q^2s^2-q^2+s^3q-2sq-s^2 \\
	 q^3+q^2s-2q+s^2q-2s+s^3 \\	
	\end{array}\ri),
\end{eqnarray*}
}
where
{\small\begin{eqnarray*}
	n(q,s)	& = & 12 \cdot (q^3s^2-2q^3s+q^3+s^3q^2-2q^2s^2+q^2s-3q+4sq-2s^3q+s^2q+2-3s+s^3)
\end{eqnarray*}}
vanishing at $q = -2 - s, q = 1, s = 1$. Furthermore we obtain a non-generic point set for $q = s, q = -s, q = -1, s = -1$ and $q = 2 - s$. We compute the Cayley-Bacharach coefficients in dependence of $q,s$ and obtain
{\SMALL\begin{eqnarray*}
	(u_7^2 + u_8^2) \cdot (m(v_7) + m(v_8)) & = & \frac{64(1+s^4 q^2+q^4 s^2-3 q^2 s^2)}{(q - s)^4} \\
	u_9^2 m(v_9) & = & \frac{16 (2 q^4 s^2+q^4+4 s^3 q^3-4 q^3 s+2 s^4 q^2-6 q^2 s^2-2 q^2-4 s^3 q+4 s q+s^4-2 s^2+4)}{(q-s)^4}.
\end{eqnarray*}
}
Note that the numerator of $(u_7^2 + u_8^2) \cdot (m(v_7) + m(v_8))$ is exactly the dehomogenized Motzkin polynomial in $s,q$ for $z = 1$. We set
\begin{eqnarray*}
	K(q,s) / L(q,s)	
	& :=	& (u_7^2 + u_8^2) \cdot (m(v_7) + m(v_8)) - u_9^2 m(v_9) \\
	& = & 16(2q^2s^2-q^2+2sq-s^2+2)/(q-s)^2.
\end{eqnarray*}

Since $q \neq s$ by assumption we just need to investigate $K(q,s)$. Thus, by \eqref{Equ:OurRelaxation} we have $(q,s) \in S$ if and only if $K(q,s) < 0$ and $q \notin \{\pm s,\pm 1, \pm 2 - s\}$. Equivalently, $S = \emptyset$ if and only if $K(q,s)$ is PSD and $q \notin \{\pm s,\pm 1, \pm 2 - s\}$. Since $K(q,s)$ is a bivariate polynomial of degree 4, $K(q,s)$ is PSD if only if it is SOS. It can be checked easily that this is not the case. 

We provide a plot of the set $S := \{(q,s) : K(q,s) < 0\} \setminus \{(q,s) : q = 2 - s \text{ or } q = -2 -s\}$ in Figure \ref{Fig:Motzkin} (note that the other non-generic cases are not part of $S$ although they are relevant for the computation). Obviously, this set is symmetric in $q = s$ and $q = -s$, semi-algebraic and for every $q$ there is an $s$ such that $(q,s) \in S$.

\ifpictures
\begin{figure}[ht]
	\includegraphics[width=0.4\linewidth]{./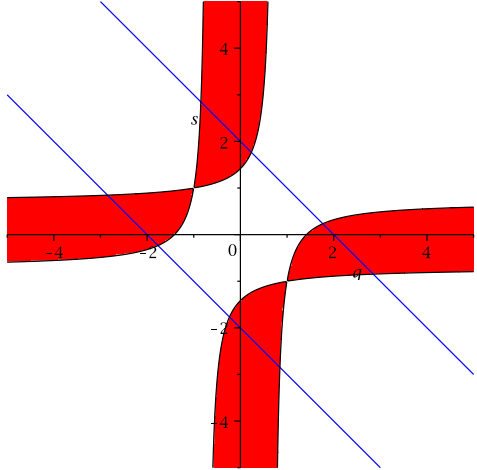}
	\caption{The set $S := \{(q,s) : K(q,s) < 0\} \setminus \{(q,s) : q = 2 - s \text{ or } q = -2 -s\}$ is given by the red area without the blue lines.}
	\label{Fig:Motzkin}
\end{figure}
\fi

Due to the rich geometric structure of $S$, it would be of interest to investigate the geometric structure of the set of appropriate point configurations satisfying \eqref{Equ:OurRelaxation} for general nonnegative polynomials with $k$ zeros.\\

In contrast, we briefly demonstrate the numerical method for finding a 9-point certificate given in Section \ref{Subsec:Blekhermanresults} and the corresponding problems. 
Let $r = (x^2 + y^2 +z^2)^3 \in \text{int}(\Sigma_{3,6})$. Then, consider the following semidefinite optimization problem
$$\min_{\lambda\in\mathbb R}^{} \lambda \qquad \text{such that}\quad m + \lambda (x^2+y^2+z^2)^3 \in \Sigma_{3,6}.$$
The optimal lambda is given numerically by $\lambda\approx 0.004596411406567$ and the corresponding sum of squares decomposition of 
$m + \lambda r \approx s_1^2 + s_2^2 + s_3^2$ is given by

{\small
$$m+\lambda (x^2+y^2+z^2)^3 \ \approx \ (-0.858558xz^2+0.941354xy^2+0.067796x^3)^2 + $$
	$$(-0.858558yz^2+0.067796y^3+0.941354x^2y)^2 + (-1.002295z^3+0.360838y^2z+0.360838x^2z)^2.$$
}

Now, one has to choose an appropriate linear combination of two of the polynomials $s_i$ to obtain the nine intersection points. But it seems unclear how to do this. E.g., since the coefficients of the $s_i$ are given numerically, computing the Gr\"obner basis of two of the $s_i$ and, say, $x-1$ cannot be expected to work properly.

Finally, we remark that our method allows to generate strictly positive polynomials that are not sums of squares including a certificate without optimization, if \eqref{Equ:OurRelaxation} is satisfied for a polynomial $p \in \partial P_{3,6} \setminus \Sig_{3,6}$ (resp. for $(n,2d) = (4,4)$). Let $l_a$ be a separating extreme ray for $p$ and define $n := p + \lam \cdot r$ with $\lam \in \R_{> 0}$ and $r \in \text{int}(\Sig_{3,6})$ (resp. $r \in \text{int}(\Sig_{4,4})$). Then $n$ is strictly positive and evaluating $n$ on $l_a$ yields
\begin{eqnarray*}
	l_a(n)	& =	& l_a(p) + \lam \cdot l_a(r)
\end{eqnarray*}
with $l_a(p) < 0$ and $l_a(r) > 0$. Hence, we can immediately solve for $\lam$ such that $l_a(n) < 0$.

\section*{Acknowledgements}
We would like to thank Grigoriy Blekherman for his helpful advices. Furthermore, we would like to thank Cordian Riener and Thorsten Theobald for their support during the development of this article.

\bibliographystyle{amsplain}
\bibliography{BibPSDvsSOS}

\appendix
\section{An example for the seven point case}

As an example of our method we investigate the polynomial
\begin{eqnarray*}
	  p	& :=	& x^2 y^2 (x-y)^2+y^2 z^2 (y-z)^2+z^2 x^2 (z-x)^2+x y z (x-y) (y-z) (z-x),
\end{eqnarray*}
given by Reznick in \cite{Reznick:PSDandSOS}. It has the seven zeros
$$
\begin{array}{llll}
	  v_1 \ := \ (1,0,0), & v_2 \ := \ (0,1,0), & v_3 \ := \ (0,0,1), & v_4 \ := \ (1,1,0) \\
	  v_5 \ := \ (1,0,1), & v_6 \ := \ (0,1,1), & v_7 \ := \ (1,1,1). & \\
\end{array}
$$
We choose $v_8 := (-2,5,-1)$ which satisfies \eqref{Equ:GenCond}.
\begin{eqnarray*}
	  h	& :=	& b_1 x^3 + b_2 y^3 + b_3 z^3 + b_4 x^2 y + b_5 x^2 z + b_6 y^2 z + b_7 y^2 x + b_8 z^2 x + b_9 z^2 y + b_{10} x y z.
\end{eqnarray*}
We compute two ternary cubics
\begin{eqnarray*}
	q_1 \ = \ -x^2 y-35 x^2 z+y^2 x+35 z^2 x, \quad q_2 \ = \ 15 x^2 z-y^2 z-15 z^2 x+z^2 y
\end{eqnarray*}
vanishing on $v_1,\ldots,v_8$ by solving the system of linear equations
\begin{eqnarray*}
	h(v_1) \ = \ 0, \ldots, h(v_8) \ = \ 0
\end{eqnarray*}
on the coefficients of $h$. $q_1$ and $q_2$ are coprime since the genericity condition \eqref{Equ:GenCond} holds for $v_1,\ldots,v_8$ by Lemma \ref{Lem:ReznickBezout}. We obtain a ninth intersection point $v_9 = (3, 10, 1)$ of $q_1, q_2$ via computing the Gr\"obner basis 
\begin{eqnarray*}
	\{6z^4 - 11z^3 + 6z^2 - z, z^2y + 35z^3 - zy - 50z^2 + 15z, -y - 35z + y^2 + 35z^2, z - 1\}
\end{eqnarray*}
of $q_1$, $q_2$ and $z - 1$ with respect to lexicographic ordering. To compute the $u_j$ of the Cayley-Bacharach relation \eqref{Equ:CB} we solve the system of linear equations given by
\begin{eqnarray*}
	u_1 h(v_1) + \cdots + u_9 h(v_9) & =	& 0.
\end{eqnarray*}
We obtain $u = (84,-1260,-36,-90,63,35,-60,3,1)^T$ and therefore
\begin{eqnarray*}
	u_8^2 p(v_8) \ = \ 48456, \quad u_9^2 p(v_9) \ = \ 56016,
\end{eqnarray*}
i.e. \eqref{Equ:7PointEquality} holds and we find a separating hyperplane for $p$ on $A$. According to Corollary \ref{Cor:ExplicitRay} we choose $a_1,\ldots,a_7 := 10^9$ and $a_8 := 1$. By \eqref{Equ:a_9} we obtain
{\small
\begin{eqnarray*}
	a_9	& =	& \frac{-u_9^2}{\frac{u_1^2}{a_1}+\dots+\frac{u_8^2}{a_8}} \ = \ \frac{- 10^9}{84^2 + (-1260)^2 + (-36)^2 + (-90)^2 + 63^2 + 35^2 + (-60)^2 + 9 \cdot 10^9} \\
		& =	& -\frac{500000000}{4500806423}.
\end{eqnarray*}
}
We check correctness of our result by
\begin{eqnarray*}
	l_a(p)	& =	& a_1 p(v_1) + \cdots + a_9 p(v_9) \ = \ p(-2,5,-1) - \frac{500000000}{4500806423} \cdot p(3,10,1) \\
		& =	& -\frac{471957277321}{5626008028750} \ < \ 0.
\end{eqnarray*}
Hence, by Blekherman's Theorem \ref{Thm:BlekhermanExtremeRay}, $l_a$ is an extreme ray of $\Sig_{3,6}^*$ separating $p$ from the SOS cone. Since it is constructed in the way of Corollary \ref{Cor:ExplicitRationalRay}, this certificate is indeed rational.

\section{A non-appropriate point configuration for the Motzkin polynomial}
\label{App:Motzkin}
We choose the six zeros of the Motzkin polynomial and the instance $v_7 := (2/7,2/3,1)$ and $v_8 := (2/3,2/7,1)$.
These eight points satisfy the genericity condition \eqref{Equ:GenCond}. We proceed the same way as before and obtain two coprime ternary cubics
\begin{eqnarray*}
	q_1 & =	 & 272 z^3+520 x^2 y-461 x^2 z+189 y^2 z-520 z^2 y \\
	q_2 & =	 & -340 z^3-461 x^2 y+340 x^2 z-189 y^2 x+189 z^2 x+461 z^2 y
\end{eqnarray*}
vanishing on $v_1,\ldots,v_8$. The Gr\"obner basis for $q_1,q_2$ and $x - 1$ with respect to lexicographic ordering is
{\small
\begin{eqnarray*}
	& & \{1365 z-2014 z^2-425 z^3+1878 z^4-940 z^5+136 z^6, -329154 z+362620 z^2+266458 z^3 \\
	& & -257985 y-362620 z^4+62696 z^5+257985 z^2 y, -239454894 z+118408655 z^2+210552038 z^3 \\ 
	& & -167167820 z^4+28902856 z^5+48759165 y^2, x - 1\}
\end{eqnarray*}
}
and the ninth point is given by $v_9 = (1, 1, 65/34)$. The Cayley-Bacharach coefficients are
\begin{eqnarray*}
	u	& =	& \lf(\frac{-264}{7},\frac{-264}{7},\frac{891}{31},\frac{-99}{10},\frac{-99}{10},1,\frac{43659}{160},\frac{43659}{160},\frac{-4913}{62} \ri)^T
\end{eqnarray*}
But now
\begin{eqnarray*}
	(u_7^2 + u_8^2) \cdot (m(v_7) + m(v_8))	& = & 2 \cdot \frac{1906108281}{25600} \cdot 2 \cdot \frac{177025}{194481} \ = \ 271097.1914 \\
	u_9^2 \cdot m(v_9)			& = & 250270.0664.
\end{eqnarray*}
If we exchange the points $v_7$ and $v_9$ or the points $v_8$ and $v_9$ we obtain in both cases
\begin{eqnarray*}
	& & (u_7^2 + u_8^2) \cdot (m(v_7) + m(v_8)) \ = \ 3291366.873 \quad \text{ and } \quad u_9^2 \cdot m(v_9) \ = \ 67774.29785.
\end{eqnarray*}

\end{document}